\documentclass[review]{elsarticle}

\journal{Journal of Geometry and Physics}

\usepackage[utf8]{inputenc}

\usepackage[normalem]{ulem}
\useunder{\uline}{\ul}{}
\usepackage{lineno,hyperref}
\usepackage{amsmath}
\usepackage{amssymb}
\usepackage{MnSymbol}
\usepackage{mathtools}
\usepackage{amsfonts}
\usepackage{color}

\definecolor{amber}{rgb}{1.0, 0.49, 0.0}
\newcommand{\stefano}[1]{{ \color{black}  #1}}

\newcommand{\fps}[1]{{ \color{black}  #1}}

\usepackage{amsthm}
\usepackage{subcaption}

\newtheorem{remark}{Remark}

\newtheorem{theorem}{Theorem}

\bibliographystyle{elsarticle-num}

\title{Energetic decomposition of Distributed Systems with Moving Material Domains: the port-Hamiltonian model of Fluid-Structure Interaction}

\author{Federico Califano\textsuperscript{1,*}\corref{RAM}, Ramy Rashad\textsuperscript{1}, Frederic P. Schuller\textsuperscript{2}, Stefano Stramigioli\textsuperscript{1}
}
\address{\textsuperscript{1} Robotics and Mechatronics Department, University of Twente, The Netherlands}
\address{\textsuperscript{2} Department of Applied Mathematics, University of Twente, The Netherlands}
\cortext[RAM]{Corresponding author. Email: f.califano@utwente.nl}

\begin{document}

\begin{abstract}
We introduce the geometric structure underlying the port-Hamiltonian models for distributed parameter systems exhibiting moving material domains. The first part of the paper aims at introducing the differential geometric tools needed to represent infinite-dimensional systems on time--varying spatial domains in a port--based framework. A throughout description on the way we extend the structure presented in the seminal work \cite{VanDerSchaft2002}, where only fixed spatial domains were considered, is carried through. As application of the proposed structure, we show how to model in a completely coordinate-free way the 3D fluid--structure interaction model for a rigid body immersed in an incompressible viscous flow as an interconnection of open dynamical subsystems.
\end{abstract}

\maketitle

\newcommand{\pair}[2]{\left\langle \left.  #1  \right|  #2 \right\rangle}

\newcommand{\weakpair}[2]{\left\langle  #1 , #2 \right\rangle}
\newcommand{\doubleweakpair}[2]{\left\langle \left\langle  #1 , #2 \right\rangle \right\rangle}

\newcommand{\gothg}{\mathfrak{g}}
\newcommand{\gothgV}{\mathfrak{g^v}}
\newcommand{\gothgstar}{\mathfrak{g}^{*}}
\newcommand{\extcovd}{\extd_{\nabla}}
\newcommand{\dotwedge}{\dot{\wedge}}

\newcommand{\extd}{\textrm{d}}

\newcommand{\goths}{\mathfrak{s}}
\newcommand{\gothsStar}{\mathfrak{s}^{*}}

\newcommand{\secTanBdl}[1]{\Gamma(T#1)}

\newcommand{\volF}{\mu_\text{vol}}

\newcommand{\abar}{{\bar{a}}}


\newcommand{\LieD}[2]{\cl{L}_{#1}#2}
\newcommand{\dt}{\frac{d}{dt}}

\newcommand{\dtLine}[1]{\frac{d}{dt}\bigg|_{#1}}
\newcommand{\depsLine}[1]{\frac{d}{d \epsilon}\bigg|_{#1}}

\newcommand{\pt}{\frac{\partial}{\partial t}}
\newcommand{\JLBrack}[3]{\llbracket{ #2},{#3} \rrbracket_{#1}}

\newcommand{\inner}[2]{\left \langle #1 , #2 \right\rangle}

\newcommand{\Lbrack}[2]{\left[#1,#2 \right]}
\newcommand{\LbrG}[2]{\Lbrack{#1}{#2}_{\gothg}}
\newcommand{\LbrS}[2]{\Lbrack{#1}{#2}_{\goths}}

\newcommand{\Pbrack}[2]{\left \{#1,#2 \right\}}

\newcommand{\parXi}[1][i]{\frac{\partial}{\partial x^{#1}}}
\newcommand{\dXi}[1][i]{\extd{ x^{#1}}}

\newcommand{\varD}[2]{\frac{\delta {#1}}{\delta {#2}}}

\newcommand{\adS}[1]{\boldsymbol{ad}_{#1}}
\newcommand{\adSdual}[1]{\boldsymbol{ad}^*_{#1}}

\newcommand{\spKForm}[2]{\Omega^{#1}(#2)}
\newcommand{\spVecF}[1]{\Gamma(T#1)}
\newcommand{\spVecX}[2]{\mathfrak{X}_{#2}(#1)}
\newcommand{\spKFormM}[1]{\spKForm{#1}{M}}
\newcommand{\spKFormMbc}[2]{\Omega^{#1}_{#2}(M)}
\newcommand{\spVecM}{\spVecX{M}{}}
\newcommand{\spVecMtrace}{\spVecX{i(\partial M)}{}}
\newcommand{\spVecMpartial}{\spVecX{\partial M}{}}
\newcommand{\spVecMt}{\spVecX{M}{t}}
\newcommand{\spFn}[1]{C^\infty(#1)}

\newcommand{\vMeas}[1]{\textbf{v}(#1)}
\newcommand{\mMeas}[1]{\textbf{m}(#1)}
\newcommand{\mForm}{\mu_t}
\newcommand{\divr}[1]{\text{\normalfont div}(#1)}
\newcommand{\sdiff}{D_\mu(M)}
\newcommand{\RVF}[1]{\cl{R}(#1)}
\newcommand{\connect}[1]{\stackrel{\bb{#1}}{\nabla}}
\newcommand{\adIflat}[2]{ad_{#1}^*(\bb{I}^\flat {#2})}
\newcommand{\vf}{{\tilde{v}}}
\newcommand{\uf}{\tilde{u}}
\newcommand{\omV}{{\hat{\omega}}}

\newcommand{\mNmOne}{(-1)^{n-1} }

\newcommand{\effOne}{\delta_{\mu}H}
\newcommand{\effTwo}{\delta_{\alpha}H}
\newcommand{\bound}[1]{i^*({#1})}
\newcommand{\effOneB}{\bound{\delta_{\mu}H}}
\newcommand{\effTwoB}{\bound{\delta_{\alpha}H}}

\newcommand{\gtInv}{g^{-1}_t}
\newcommand{\diffG}{\mathcal{D}(M)}

\newcommand{\parL}[1]{\frac{\delta l}{\delta {#1}}}

\newcommand{\Mbound}{\partial M}

\newcommand{\Ltwo}{\cl{L}_2}
\newcommand{\Mflat}{\bb{M}^\flat}
\newcommand{\Msharp}{\bb{M}^\sharp}

\newcommand{\state}{(\alpha,\mu)}

\newcommand{\B}[1]{\boldsymbol{#1}}
\newcommand{\bb}[1]{\mathbb{#1}}
\newcommand{\cl}[1]{\mathcal{#1}}
\newcommand{\Rn}{\bb{R}^n}

\newcommand{\tens}{%
  \mathbin{\mathop{\otimes}}%
}

\newcommand{\todo}[1]{{\color{red}{#1}}} 


\newcommand{\half}{\frac{1}{2}}

\newcommand{\TwoTwoMat}[4]{
\begin{pmatrix}
#1 & #2 \\
#3 & #4
\end{pmatrix}}
\newcommand{\TwoVec}[2]{
\begin{pmatrix}
#1\\
#2 
\end{pmatrix}}

\newcommand{\ThrVec}[3]{
\begin{pmatrix}
#1\\
#2\\
#3
\end{pmatrix}}

\newcommand{\map}[3]{{#1}:{#2}\rightarrow {#3}}
\newcommand{\fullmap}[5]{
\begin{split}
#1 : {#2} &\rightarrow {#3}\\
	{#4} &\mapsto {#5}
\end{split}
}

\newcommand{\fullmapNoName}[4]{
\begin{split}
 	{#1} &\rightarrow {#2}\\
	{#3} &\mapsto {#4}
\end{split}
}

\newcommand{\trace}{\text{\normalfont tr}}

\newcommand{\blue}{\color{blue}}

\newcommand{\pH}{port-Hamiltonian }

\newcommand{\DkT}{\cl{\tilde{D}}_k}

\newcommand{\fstr}{\text{f}_\text{s}}
\newcommand{\eBoud}{e_{\partial k}}
\newcommand{\fBoud}{f_{\partial k}}

\newcommand{\eSV}{{e}_{sk}}
\newcommand{\fSV}{{f}_{sk}}

\newcommand{\ramy}[1]{{\color{black}#1}}
\newcommand{\DiracEuler}{\mathcal{D}_\text{E}}
\newcommand{\PsiB}{\Psi_\text{b}}
\newcommand{\PsiV}{\Psi_\text{v}}

\section{Introduction}
The goal of this work is to extend the existing port-Hamiltonian theory of open distributed parameter systems on fixed spatial domains to the practically relevant case where the spatial domains can vary in time. Building on the foundational exposition  \cite{VanDerSchaft2002} of the theory for fixed domains, also the extended theory presented here will be cast in proper differential geometric language. One intended application pushing this extension of the theory is a complete port-Hamiltonian study of fluid-solid interactions, which we are ultimately able to provide in the second part of the paper. 

Port-Hamiltonian theory in general, quite apart from the intricacies posed by distributed parameter systems or moving domains for the latter,
generalizes the Hamiltonian description for inherently closed dynamical systems to the case of open systems \cite{Geoplex}. On the one hand, the theory allows to describe an open system entirely in its own right, without any assumptions about the environment. It does so employing the concept of ports, which give the theory its name. A port is a pair of suitably chosen dual quantities that appear in the description of an open system. It is via these ports that an open system can gain or lose energy, which manifests itself in the fact that the canonical duality product between the dual port variables yields the rate of change, or power, of the energy transfer. On the other hand, port-Hamiltonian theory also provides the mathematical formalism of how an open system is coupled to other open systems through their respective ports. The central construction here is a so-called Dirac structure, which abstractly speaking sits in between the open systems and routes all energy flows, which come in and flow out through the various ports, such that the total energy is conserved. A system composed in such a way of several open subsystems and an interconnecting Dirac structure may itself be still open, namely if some ports are left uncoupled, or closed.  

When port-Hamiltonian theory is applied to systems whose parameters are distributed over a spatial domain (we refer to \cite{rashad2020twenty} for a literature review on infinite-dimensional port-Hamiltonian systems), any possible port falls into one of two classes: {\it distributed ports} which channel power flows within the spatial domain of an open system and {\it boundary ports} which channel power flows through the boundary of the same spatial domain. Thus it is no surprise that the consideration of moving domain boundaries results in novel boundary port variables compared to the previously studied fixed boundary case. The sophistication required of the underlying mathematical theory varies between different types of distributed parameter systems. As we delineate in Section \ref{sec:stresstensor}, the highest demands come from open systems in continuum mechanics, both solid or fluid, and require the use of nested manifolds and generalized tensor-valued forms thereon in order to describe the non-trivial energy transfer across domain boundaries. 

The idea of a moving domain boundary,
and a version of the associated port variables, can also be studied in a technically simpler setting than the one required by the intricacies of continuum mechanics, providing a way to extend port-Hamiltonian theory to the study of moving domain boundaries for some systems of relevance introduced in \cite{VanDerSchaft2002}, such as the electromagnetic field and ideal fluid dynamics.
We start our theoretical constructions in Section \ref{sec:newPairing} by employing such a simpler setting, in order to disentangle the concept of a moving domain boundary from the technical refinements provided later. 

As a signature application of our techniques to treat moving domain boundaries, we study the boundary interconnection of a rigid body with a viscous fluid and provide a complete energetic decomposition of this system in port-Hamiltonian fashion. In particular, we show how the novel port-based representation of the motion of the spatial domains is necessary to correctly implement a {\it no-slip} condition between the fluid and the solid.
One significant advantage of this  decomposition of the system --- into separate open subsystems and the precise structure of their interconnection via a Dirac structure --- is its modularity: When needed, the overall port-Hamiltonian model can be updated by replacement, addition or removal of subsystems in order to provide either more specialised or arbitrarily more sophisticated models than the one considered here. Another advantage is that the complete port-Hamiltonian decomposition allows to analyse the power flow between all system components in order to derive conclusions on the stability of a sophisticated multi-component system \cite{Geoplex} or to devise novel numerical algorithms that exploit the associated conservation laws subsystem by subsystem \cite{califano2021decoding}. In other words, the present work allows to extend the port-Hamiltonian description of our previous fluid models \cite{rashad2020porthamiltonian1,rashad2020porthamiltonian2,califano2021geometric} to the fluid-structure interaction model in Section \ref{sec:pH_fsi}. 

It behoves us to point out pertinent previous work in the direction of our contributions. Partial results on moving boundaries within the port-Hamiltonian framework have been obtained in \cite{Diagne2013}, which provides a non-geometric treatment of one-dimensional domains with moving boundaries by consideration of a moving interface that separates two subsystems and is subject to its own dynamics, and in \cite{Vu2016}, which develops a port-Hamiltonian model that includes the moving material domains of a Tokamak and is conceptually and technically situated at roughly the level of our partial analysis in section \ref{sec:newPairing}. Various simplified port-Hamiltonian models of fluid-structure interactions, tailored for particular applications and in non-geometric formulation, can be found in \cite{mora18,mora20,cardoso15}. Standard Hamiltonian and Lagrangian treatments of systems within the scope of the present paper, such as \cite{kanso2005locomotion,vankerschaver2010dynamics,jacobs2012fluid,glass2012movement,planas2014viscous,fusca2018groupoid}, touch upon various technical issues also of relevance for our port-Hamiltonian treatment, but are inherently constrained to consider closed systems and thus cannot resolve the energetic subsystems and the energy routing between them. 

\paragraph{Notation} 
Throughout the paper, we use \textit{bond graphs} \cite{Geoplex} for the graphical representation of systems in the port-Hamiltonian formalism, in order to aid those readers familiar with this sophisticated diagrammatical language, but the paper can be read and understood without them.
\fps{The other mathematical symbols used in this paper are as follows.  The mathematical model of fluid domains is a compact, orientable, $n$-dimensional Riemannian manifold $M$ with (possibly empty) boundary $\partial M$. The} space of vector fields on $M$ is the space of sections of the tangent bundle $TM$, that will be denoted by $\spVecM$. The space of differential $p$-forms is denoted by $\Omega^p(M)$ \fps{and we also occasionally refer to $0$-forms as functions, $1$-forms as covector fields} and $n$-forms as top forms. For any
$v\in \spVecM$, we use the standard definitions for the interior product by $\iota_v:\Omega^p(M) \to \Omega^{p-1}(M)$ and the Lie derivative operator $\mathcal{L}_v$ acting on tensor fields of any valence. 
The Hodge star operator $\star:\Omega^p(M) \to \Omega^{n-p}(M)$ and the associated volume form $\volF=\star 1$ as well as the musical operators $\flat:\spVecM \to \Omega^{1}(M)$ and $\sharp:\Omega^{1}(M) \to \spVecM$, which respectively transform vector fields to 1-forms and vice versa, are all uniquely induced by the Riemannian metric in the standard way. 
When making use of Stokes theorem $\int_M \extd \omega=\int_{\partial M} \textrm{tr}(\omega)$ for $\omega \in \Omega^{n-1}(M)$. The trace operator $\textrm{tr}:=i^*$ is the pullback of the\fps{canonical} inclusion map $i: \partial M \hookrightarrow M$.
When dealing with tensor-valued forms we adopt the additional convention that a numerical index $i\in\{1,2\}$ on the left or the right of a standard operator on differential forms indicates whether the operator acts on the ``first leg'' (the tensor value) or on the ``second leg'' (the underlying form) of the tensor-valued form on the respective side of the operator: For an $n$ form-valued $m$ form $\alpha\otimes\beta$, for instance, we define $\star_1(\alpha \tens \beta):=\star\alpha \tens \beta$ and $\star_2(\alpha \tens \beta):=\alpha \tens \star\beta$. 
For the representation of the above concepts in terms of coordinate charts, see for instance \cite{gilbert2019geometric}. We will introduce along the paper more advanced operators and constructions where needed.

\section{Energy Continuity and Geometric Reynolds Transport theorem}
\label{sec:newPairing}
In this section, after briefly resuming the \pH structure underlying the classical model on fixed domains present in \cite{VanDerSchaft2002}, we introduce a simple way to represent the power port corresponding to moving domains. The mathematical technology to express this port will need to be refined when tackling continuum mechanics, which will be done in the next section, but is still powerful enough to represent the moving domain version of the systems studied in \cite{VanDerSchaft2002}, such as the electromagnetic field and ideal fluid dynamics. 

\subsection{Fixed Domains}
In \cite{VanDerSchaft2002} the port-Hamiltonian formulation of distributed parameter systems is given on a fixed $n$-dimensional Riemannian manifold $M$. A distributed physical system is characterised by an energy density $\mathcal{H}:\cl{X}\rightarrow \Omega^n(M)$, an extensive variable that produces the total energy (the \textit{Hamiltonian} functional) of the system once integrated over its spatial domain, i.e. $H[x]:=\int_M \mathcal{H}(x)$. We assume the density $\mathcal{H}$ to not depend explicitly on time, but only on the so-called \textit{energy variables} $x:=(x_1,\cdots,x_m)\in \cl{X}$, where $x_i\in \Omega^{k_i}(M)$ are differential forms of appropriate degrees, for $i\in \{1,\cdots,m\}$. 
In this setting the variation of total energy is taken as
\begin{equation}
\label{eq:DensityVariationFixedBoundary}
   \dot{H}=\int_M \dot{\mathcal{H}} = \int_M \delta_{x_1}H \wedge \dot{x}_1 + \cdots + \delta_{x_m}H \wedge \dot{x}_m ,
\end{equation} 
where $\delta_{x_i}H \in \Omega^{n-k_i}(M)$ denotes the variational derivative of the Hamiltonian with respect to the energy variable $x_i$, that can be shown to be a differential form of complementary degree with respect to the associated energy variable $x_i$. In this sense, the pH formulation in \cite{VanDerSchaft2002} is based on the duality product expressed by (\ref{eq:DensityVariationFixedBoundary}) by means of the wedge product. In \cite{VanDerSchaft2002} the whole construction is limited to $m=2$ and energy variables being a $p$-form and $q$-form, such that their dynamics could be defined in terms of a canonical pH model based on exterior derivative operators\footnote{This choice corresponds to a system of two conservation laws, with the constraint $p+q=n+1$.}. This model would encode, together with the dynamics of the single energy variables, the power continuity equation, that is the first principle of thermodynamics.
In fact the pH system encodes the power equality
\begin{equation}
\label{eq:continuityFixedBoundary}
\dot{H}=\int_M \dot{\mathcal{H}}=\int_M (\extd \Phi + \sigma)=\int_{\partial M} \textrm{tr} (\Phi) +\int_M \sigma
\end{equation}
for some $\Phi \in \Omega^{n-1}(M)$ (\textit{in/out power flux}) and $\sigma \in \Omega^n(M)$ (\textit{power source/sink}), representing energy continuity equation at an integral level.
As standard in the pH approach, $\textrm{tr}( \Phi)$ and $\sigma$ are expressed as duality products of novel defined dual port variables (called \textit{effort} and \textit{flow}), of respectively boundary and distributed type. In particular the port Hamiltonian formulation allows to represent the boundary power flux $\textrm{tr}(\Phi) \in \Omega^{n-1}(\partial M)$ as the duality product $e_\Phi \wedge f_\Phi$ for some differential forms on $\partial M$ whose degrees summed together is $n-1$. Similarly the source term $\sigma$ is expressed as the duality product $e_\sigma \wedge f_\sigma$ in which the variables are differential forms defined on $M$ and the sum of their degrees is $n$. The power port corresponding to the storage element is characterised by effort $e_s=\delta_x H$ and flow $f_s=-\dot{x}$, where $x$ collects all the energy variables.
The pH model is then characterised by the following power continuous property
\begin{equation}\label{eq:Dirac_power_balance_1}
\underbrace{\int_{M} e_s \wedge f_s}_{-\dot{H}} + \underbrace{\int_{\partial M} e_\Phi \wedge f_\Phi}_{\textrm{Boundary Power Flow}} + \underbrace{\int_{M} e_\sigma \wedge f_\sigma}_{\textrm{In-domain Power Flow}}=0,
\end{equation}
resembling  (\ref{eq:continuityFixedBoundary}).

\subsection{Moving Domains} 
A limitation in \cite{VanDerSchaft2002} is the intrinsic view of the distributed parameter system, i.e., even if proper boundary terms $e_{\Phi}$ and $f_{\Phi}$ pop out at a power balance level, the spatial domain is constraint to be fixed. For modelling physical systems whose spatial interconnection produces a relative motion of their common interface (which is what happens e.g., in a FSI system) it is necessary to adopt an \textit{extrinsic} geometric modelling approach. This means that we will consider the case in which the spatial domain $M$ is embedded in a bigger, fixed ambient space, in which it is possible to describe a motion $M(t)$ for every time instant $t$ (see Sec. \ref{sec:fsi}). In order to extend the model to account for moving spatial domains one needs to generalise the energy continuity equation (\ref{eq:DensityVariationFixedBoundary}) to the version that considers a moving domain $M(t)=: M_t$ (with moving boundary $\partial M(t)=:\partial M_t$), that reads \cite{frankel2011geometry}
\begin{equation}
\label{eq:DensityVariationMovingBoundary}
   \dot{H}=\int_{M_t} \dot{\mathcal{H}} + \mathcal{L}_u \mathcal{H}, 
\end{equation} 
where $u \in \spVecX{M_t}{}$ is the vector field representing the spatial motion of $M_t$. This expression, which is an application of \textit{Reynolds transport theorem} to the energy density of the system, establishes a new boundary port that allows for power flows in the system. In fact, using the fact that $\mathcal{H}$ is a top-form, applying \textit{Cartan's magic formula} and Stokes theorem, the power continuity equation (\ref{eq:DensityVariationMovingBoundary}) can be rewritten as
\begin{equation}
\label{eq:DensityVariationMovingBoundary2}
   \dot{H}=\int_{M_t} \dot{\mathcal{H}} + \extd \iota_u \cl{H} =\int_{M_t} \dot{\mathcal{H}}+ \int_{\partial M_t}\textrm{tr}(\iota_u  \mathcal{H}).
\end{equation} 
Using the identity 
\begin{equation}
\label{eq:hirani}
    \iota_{u}  \mathcal{H}=\star \mathcal{H} \wedge \star u^{\flat},
\end{equation}
where $u^{\flat}:=\flat(u)$, and distributing the trace over the wedge product, we can model the mechanism as a boundary port describing the evolution in time of the boundary $\partial M_t$, in which the boundary port variables $(e_u,f_u)$ and their pairing are defined as
\begin{align}
\label{eq:boundaruEffort1}
    e_u:=\textrm{tr} (\star \mathcal{H})\in \Omega^0(\partial M),\\
    \label{eq:boundaruFlow1}
    f_u:=\textrm{tr}(\star u^{\flat})\in \Omega^{n-1}(\partial M),\\
    \label{eq:boundaryPairing1}
    \langle e_u|f_u \rangle:=\int_{\partial M_t}e_u \wedge f_u.
\end{align}

\begin{figure}
\centering
\includegraphics[width = 0.9\textwidth]{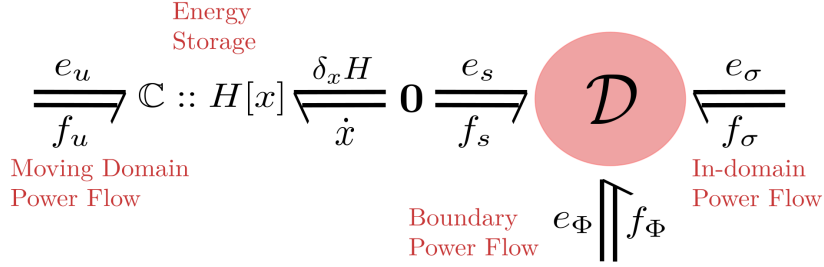}
\caption{Graphical representation of a distributed port-Hamiltonian system on a moving spatial domain with three open ports characterising exchanged external power between the system and the external world. Notice that the bondgraph for the system with fixed domain is identical, but without the moving domain port.}
\label{fig_pH_model_moving_domain}
\end{figure}

From a generalised Bond-Graph perspective, as shown in Fig. \ref{fig_pH_model_moving_domain}, the $\bb{C}$-element corresponding to the energy functional $H$, becomes a \textit{dual port storage element}, on which the flow variable $f_u$ contains information on how the boundary is moving while the effort $e_u$ is just the energy function evaluated at the boundary. Their pairing represents then the variation of the total energy due to the movement of the boundary. Therefore,  we can rewrite (\ref{eq:DensityVariationMovingBoundary}) as
\begin{equation}\label{eq:Hdot_C_element}
	 \dot{H}= \int_{M_t} \delta_x H \wedge \dot{x} + \int_{\partial M_t} e_u \wedge f_u,
\end{equation}
which defines the energy stored represented graphically by the $\bb{C}$-element in Fig. \ref{fig_pH_model_moving_domain}.

In this general setting, and using the pH model corresponding to the specific physical system to calculate the term $\dot{\mathcal{H}}$ as in (\ref{eq:continuityFixedBoundary}), the energy continuity equation at an integral level
generalises to
\begin{equation}
\label{eq:continuityMovingBoundary3}
\dot{H}=\int_{M_t} \dot{\mathcal{H}}+\mathcal{L}_u \mathcal{H}=\int_{M_t} (\extd (\Phi+\iota_u\mathcal{H}) + \sigma)=\int_{\partial M_t} \textrm{tr} (\Phi+\iota_u \mathcal{H}) +\int_{M_t} \sigma,
\end{equation}
where, as for the computation of the second term in (\ref{eq:DensityVariationMovingBoundary2}), we used Cartan's magic formula on the Lie derivative. Similarly to \cite{Vu2016}, we recognise the quantity $\Phi+\iota_u \mathcal{H}=:\Phi_{\textrm{rel}}$ 
as the \textit{relative power flux} to the observer, i.e., to the moving spatial domain $M_t$. 

This means that the total energy within the domain will clearly depend on the state variable fields within the domain, considering that the energy density is not directly dependent on time, and on the motion of the spatial domain itself.
The given definition of the boundary port describing the power injection due to a moving spatial domain has the advantage of being completely characterised by scalar-valued (or standard) differential forms and the exterior wedge product. It follows that the power continuous property encoded by the model described in \cite{VanDerSchaft2002} is extended without the use of new mathematical tools. The new port-Hamiltonian model is then characterised by the following power continuous property, which combines the power flows due to internal dynamics and the effect of moving spatial domain:
\begin{equation}\label{eq:moving_domain_power_balance}
\dot{H} = \underbrace{\int_{\partial M_t} e_\Phi \wedge f_\Phi}_{\textrm{Boundary Power Flow}}  + \underbrace{\int_{\partial M_t} e_u \wedge f_u}_{\textrm{Moving Domain Power Flow}}+ \underbrace{\int_{M_t} e_\sigma \wedge f_\sigma}_{\textrm{In-domain Power Flow}},
\end{equation}
resembling indeed (\ref{eq:continuityMovingBoundary3}). Using the definitions of $e_s=\delta_x H$ and $f_s=-\dot{x}$ and (\ref{eq:Hdot_C_element}), we can rewrite (\ref{eq:moving_domain_power_balance}) as
\begin{equation}\label{eq:Dirac_power_balance_2}
	\int_{M_t} e_s \wedge f_s + \int_{\partial M_t} e_\Phi \wedge f_\Phi + \int_{M_t} e_\sigma \wedge f_\sigma=0,
\end{equation}
which is identical to (\ref{eq:Dirac_power_balance_1}).
This power balance is encoded graphicaly in Fig. \ref{fig_pH_model_moving_domain} by the so called Stokes-Dirac structure $\cl{D}$ whose exact definition depends on the physical system described by the distributed \pH system as will be demonstrated later.

\section{Stress tensor in continuous mechanics: a new pairing is needed}
\label{sec:stresstensor}
In the previous section we reviewed the \pH structure based on the $\wedge$ pairing introduced in \cite{VanDerSchaft2002} and extended it to the case of moving domains without introducing new mathematical tools, i.e., the newly introduced power port is correctly modelled by the $\wedge$ pairing introduced in \cite{VanDerSchaft2002}. In the following we show that:
\begin{enumerate}
    \item A new pairing is needed to correctly capture the boundary power flow in continuous mechanics, regardless of the underlying spatial manifold being fixed or moving;
    \item This new pairing will induce a new way to represent the previously introduced moving domain power port, which will be crucial to implement the \textit{no-slip} condition in a FSI system.
    \item Specialising the proposed construction to the case of viscous fluid dynamics will allow to represent a geometric \pH model of FSI in the next section.
\end{enumerate}

The reason why a new pairing is needed to represent boundary power flow in continuous mechanics, and as a consequence the bond space in the pH framework needs to be extended, is due to the tensorial nature of \textit{stress}. We remark that this extension is technically present in \cite{califano2021geometric} where the bond space in the pH framework for Newtonian fluids is presented on a fixed domain, but we shall give a clear motivation in the sequel, abstracting from a specific physical system. The reason why this extension is necessary is that stress does not possess a representation as standard differential form, but needs to be a covector-valued $(n-1)$-form. The motivation behind this necessity is well addressed e.g., in \cite{frankel2011geometry,kanso2007geometric} and the underneath intuition is that stress must be geometrically an object to be integrated on a surface (the $(n-1)$-form) to get a covector, i.e. the traction force. In the following we give rigorous definitions and constructions building on this idea.
\subsection{Stress as covector-valued form and its trace}

Let as usual $M$ be a smooth manifold. We indicate with $\chi^{(r,s)}(M)$ the $C^\infty(M)$-module of tensor fields of valence $(r,s)$ and $\Omega^{p}(M)$ the submodule of $p$-forms. As examples we remind $\chi^{(1,0)}(M)=\spVecX{M}{}$ and $\chi^{(0,1)}(M)=\Omega^1(M)$. A \textit{tensor-valued form} of valence $(r,s,p)\in\mathbb{N}^3$ on $M$ is then an element of the tensor product
$$\chi^{(r,s)}(M) \otimes \Omega^{p}(M)$$
of modules. 

In continuum mechanics, the stress tensor is a covector-valued $(n - 1)$-form $\cl{T}\in \Omega^1(M) \tens \Omega^{n-1}(M)$, or equivalently a tensor-valued form on $M$ of valence $(0,1,n-1)$.
For formal correctness we remark that integration of $\cl{T}$ over a $(n-1)$-dimensional surface is geometrically meaningless in a curved space where tensors attached to different points cannot be meaningfully summed and therefore also not integrated \cite{Marsden2012}. However it is interesting to notice that when computing the \textit{stress power}, defined by the pairing of $\cl{T}$ with the velocity field of the continuum, only integration of the scalar power density is considered. In fact it is possible to define the (metric independent) stress power on $\partial M$:
\begin{equation}
\label{eq:SurfacePowerStress}
    \text{Stress Power}=\int_{\partial M} \textrm{tr}(\iota_v{\cl{T}}),
\end{equation}
 where $v\in \spVecM$ is the vector field corresponding to the macroscopic velocity field of the continuum. 
The key observation to understand the necessity of extending the pH structure in this context is that identity (\ref{eq:hirani}) cannot be applied, since $\cl{T}$ is not a top-form. As a consequence it is not possible to express the pairing $\iota_v \cl{T}$ in a form involving the $\wedge$ operator, and thus it is not possible to distribute the trace operator over the stress power density to define boundary effort and flow variables characterised by the usual bond-space based on the $\wedge$ product.

For an efficient construction of  boundary ports in continuum mechanics we need a refinement of the notion of tensor-valued form. Rather than being defined on one ordinary smooth manifold $M$, this refinement takes place on a \textit{nested manifold} $(U,M,f)$ which we define by specification of two smooth manifolds and an injective smooth map $f: U \to M$ between them. Note that any smooth manifold $M$ gives rise to a trivial nested manifold $(M,M,\textrm{id}_M)$ and that any smooth manifold $M\supseteq \partial M$ with boundary gives rise to a nested manifold $(\partial M, M, i)$ where $i: \partial M \to M$ is the canonical inclusion map. Indeed, these two examples of nested manifolds are precisely the two cases that play a role in the constructions of this paper.  

A \textit{generalized tensor-valued form} of valence $(r,s,p)\in\mathbb{N}^3$ on a nested manifold $(U,M,f)$ is an element of the set
$$\chi^{(r,s)}(M) \otimes_f \Omega^{p}(U)\,,$$
which denotes that subset of $\chi^{(r,s)}(M) \otimes \Omega^{p}(U)$ whose elements $\sigma$ satisfy the condition
$$\sigma(u) \in (T^*_{f(u)}M)^{\otimes r} \otimes (T_{f(u)}M)^{\otimes s} \otimes (T^*_uU)^{\wedge p} \}\qquad\textrm{for all } u \in U\,,$$
where a superscripts ${\otimes r}$ or $\wedge p$ to the right of a vector space indicate, respectively, the $r$-fold tensor product or $p$-fold antisymmetric tensor product of that vector space.
Pointwise definition of the relevant operations makes the set $\chi^{(r,s)}(M) \otimes_f \Omega^{p}(U)$ into a $C^\infty(U)$-module, and as such we will use it. 
Note that a generalized tensor-valued form $\sigma$ on the trivial nested manifold $(M,M,\textrm{id}_M)$ is just a tensor-valued form. This is not the case, however, for a nested manifold of the form $(\partial M, M, i)$; the generalized tensor-valued forms on there cannot be expressed in terms of ordinary tensor-valued forms on $M$. The partial trace of tensor-valued forms, which is defined in the following, provides an example for a generalized covector-valued form on the non-trivial nested manifold $(\partial M, M, i)$. 

The \textit{partial trace} operator
$$
\textrm{ptr}: \chi^{(r,s)}(M)\otimes\Omega^p(M) \longrightarrow \chi^{(r,s)}(M)\otimes_i \Omega^p(\partial M)\,,\qquad
\sigma \mapsto \textrm{ptr}(\sigma)
$$
maps an ordinary tensor-valued form on a smooth manifold $M$ to a generalized tensor-valued form of the same degree on the nested manifold $(\partial M, M, i)$ induced by $M$ and is defined such that for all $\alpha_1,\dots,\alpha_r\in\Omega^1(M)$, $A_1,\dots,A_s\in \spVecX{M}{}$ and $X_1,\dots,X_p\in \spVecX{\partial M}{}$ one has
\begin{eqnarray}
& & \textrm{ptr}(\sigma)(\alpha_1,\dots,\alpha_r,A_1,\dots,A_s,X_1,\dots,X_p)\nonumber \\
& & \qquad:= \sigma(\alpha_1\circ i,\dots,\alpha_r\circ i,A_1\circ i,\dots,A_s\circ i, i_*X_1,\dots,i_*X_{p})\nonumber\,,
\end{eqnarray}
where $i_*:\spVecX{\partial M}{} \to \spVecX{M}{}$ is the pushforward of the inclusion map.

Applying this operator to the stress tensor, we obtain the generalised tensor-valued form $\textrm{ptr}(\cl{T}) \in \Omega^1(M) \tens_i \Omega^{n-1}(\partial M)$, characterised by
$$\textrm{ptr}(\cl{T})(A,X_1,\dots,X_{n-1}) := \cl{T}(A \circ i,i_*X_1,\dots,i_*X_{n-1})$$
for every $A\in \spVecX{M}{}$ and $X_1,\dots,X_{n-1} \in \spVecX{\partial M}{}$.

Notice that the last $n-1$ slots of $\textrm{ptr}(\cl{T})$ can be filled with vectors living on $\partial M$, i.e. represent the "form part" of the tensor, encoding the $(n-1)$-dimensional surface over which $\textrm{ptr}(\cl{T})$ can be integrated. Instead its first slot, the "covector-valued" part of the tensor, can be filled by vectors on $M$, but restricted to those spanning from tangent spaces at points on $\partial M$.
\begin{remark}
This definition is crucial also to technically define the traction force in this differential geometric framework. In fact, even in the case $M$ is the subset of an Euclidian space (where technically the integration of a covector valued form can be computed), the expression $\int_{\partial M} \cl{T}$ for the traction force over the surface $\partial M$ is ill-defined, since $\cl{T}$ is a form over $M$, and not over $\partial M$. Instead the expression $\int_{\partial M} \textrm{ptr}(\cl{T})$ for the traction force is (only in flat spaces) well-defined. 
\end{remark}

\subsection{A new pairing}
We now define a product $\dot\wedge$ between generalised tensor-valued forms that is universally useful in the context of the port-Hamiltonian description of distributed systems. Since, for what concerns the definition of the boundary port, it will only be needed between a generalised covector-valued form and a generalised vector-valued form on a nested manifold $(U,M,f)$, i.e., between 
$$\sigma\in \spKForm{1}{M} \otimes_f\Omega^p(U)\qquad\textrm{and}\qquad\nu\in \spVecX{M}{}\otimes_f\Omega^q(U)\,,$$
we restrict the definition to this case in order to keep the definition technically as simple as possible and define the value contracting wedge product between these as
\begin{eqnarray}
& &(\sigma \dot\wedge \nu)(X^{(1)},\dots,X^{(p+q)})\nonumber\\
& & \qquad :=  \sum_{\pi\in S^{p+q}} \frac{\textrm{sgn}(\pi)}{(p+q)!}\, \sigma\!\left(\nu(X^{(\pi(p+1))},\dots,X^{(\pi(p+q))}),X^{(\pi(1))},\dots,X^{(\pi(p))}\right)\nonumber\,,
\end{eqnarray}
where $S^n$ denotes the set of all permutations of the first $n$ non-zero integers. Clearly, the value-contracting wedge product is an ordinary $(p+q)$-form on $U$:
$$\sigma\dot\wedge\nu \in \Omega^{p+q}(U)\,.$$

Now let us further specialise of the arguments of the $\dot{\wedge}$ pairing in order to define a novel boundary port for the representation of stress power in continuum mechanics. Since the velocity vector field $v$ of the continuum can be identified as a vector-valued zero-form form on $M$ (or equivalently as a tensor-valued form of valence $(1,0,0)$), the following identities hold:
\begin{equation}
\label{eq:extendedHirani}
    \textrm{tr}(\iota_v \cl{T})=\textrm{tr}(\cl{T}\dot{\wedge} v)=\textrm{ptr}(\cl{T})\dot\wedge\textrm{ptr}(v),
\end{equation}
where the $\dot\wedge$ on the middle term is thus defined on the trivially nested manifold $(M,M,\textrm{id}_M)$, whereas the $\dot\wedge$ on the right hand side is defined on the nested manifold $(\partial M, M,i)$, which \textit{de facto} defines the new boundary port.
In fact the the stress power at $\partial M$, at time $t$ is
$$\int_{\partial M} \textrm{tr}(\iota_v \cl{T}) = \langle e|f\rangle,$$
where
$$\langle e|f\rangle := \int_{\partial M} \textrm{ptr}(\cl{T})\dot\wedge\textrm{ptr}(v)$$
is the duality product for the effort 
$e := \textrm{ptr}(\cl{T})$
and the 
flow
$f := \textrm{ptr}(v) \in \spVecX{M}{} \tens_i \Omega^0(\partial M)\,.$ Notice that the flow variable $f$ is just the vector field $v$, but restricted at the boundary, i.e., a map from $\partial M$ to $TM$ (see Fig. \ref{fig_commutative_diagram}), which is indeed the vector that, by definition of partial trace, can be inserted in the first slot of $\textrm{ptr}(\cl{T})$.

\begin{remark}
The introduced notation is slightly different than the one presented in \cite{califano2021geometric,rashad2021exterior}, where the newly defined effort and flow variables are equivalently defined as \textit{two-point tensors}. In particular the notation $\iota^*_2 (\cl{T})$ was used in \cite{califano2021geometric,rashad2021exterior} for the partial trace of the stress, recalling the fact that the pullback applied only on the "second leg" of the stress tensor, and $\iota^*_2(v)$ was used for $\textrm{ptr}(v)$. 
\end{remark} 

\begin{remark}
The operator $\dot{\wedge}$ in the literature, see \cite{kanso2007geometric,gilbert2019geometric,califano2021geometric} for details, is normally introduced in the definition of the \textit{exterior covariant derivative}, an operator which is essential to represent momentum conservation (like e.g., Navier Stokes equations) on manifolds. It is often defined implicitly in the context of two-point tensors, as the binary operator taking as argument vector-valued forms with dual properties on the first leg, which are paired producing a function, while the form parts of the tensors (the second leg) are wedged in the usual sense. Notation-wise, we cannot resist in observing the interesting fact that what is needed to extend the geometric pH structure introduced in \cite{VanDerSchaft2002} and based on the $\wedge$, is an operator which was indicated is $\dot{\wedge}$ \cite{kanso2007geometric} independently of any port-based structure.
\end{remark}

\begin{figure}
\centering
\includegraphics[width = 0.5\textwidth]{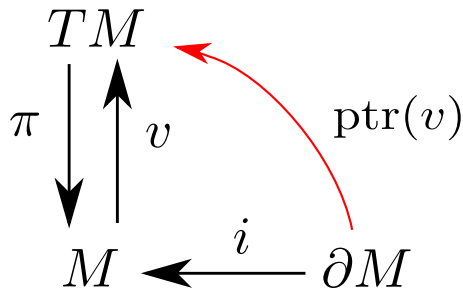}
\caption{Commutative diagram illustrating the definition of the partial trace of a vector field $v\in \spVecM$.}
\label{fig_commutative_diagram}
\end{figure}

\subsection{A new representation of the moving domain port}
The previous construction introduced to represent boundary power flows in continuous mechanics has an important consequence: it induces an alternative way to represent the moving domain port (\ref{eq:boundaruEffort1}-\ref{eq:boundaryPairing1}), which will be fundamental to impose the physically relevant boundary condition in a FSI system, i.e., no slip.

The key observation that explains why this is true, is that the moving domain port is always characterised by an effort being the energy density $\cl{H}$ of the system. Being $\cl{H}$ a top-form, it always possesses a representation as a covector-valued form, i.e., the energy density and stress are tensors of the same valence. As a consequence, the construction made for stress, and in particular identities in (\ref{eq:extendedHirani}), will hold also for the moving domain port, with $\cl{H}$ in lieu of $\cl{T}$.

In summary, in contrast to (\ref{eq:boundaruEffort1}-\ref{eq:boundaryPairing1}), the new representation for the moving boundary port $(e_u,f_u)$ is then defined as
\begin{align}
\label{eq:boundaruEffort2}
    e_u=\textrm{ptr}(\mathcal{H})\in \Omega^1(M_t) \tens_i \Omega^{n-1}(\partial M_t),\\
    \label{eq:boundaruFlow2}
    f_u=\textrm{ptr}(u) \in \spVecX{M_t}{} \tens_i \Omega^0(\partial M_t),\\
    \label{eq:boundaryPairing2}
    \langle e_u|f_u \rangle :=\int_{\partial M_t}e_u \dot{\wedge} f_u.
\end{align}

Even if the power computed by means of the two pairings produces the same result, the second version of the port allows for representing more information on the single effort and flow variables. This follows by noting that in the first representation $f_u=\textrm{tr}(\star u^{\flat})$ is a top form on the manifold $\partial M$, which encodes the information only on the normal component of the vector field $u$ evaluated at points in $\partial M$. In fact, if $u$ is tangent to $\partial M$, then $\textrm{tr}(\star u^{\flat})=0$. 
This makes sense since in this case the pairing represents pure energy advection, and the effort variable $e_u=\textrm{tr}(\star \mathcal{H})$ is just the evaluation at $\partial M$ of the energy function. As a consequence only normal components of the velocity field do influence the total energy content in $M$. In the second representation instead, the flow variable $\textrm{ptr}(u)$ contains the information on the complete vector field $u$, once restricted to $\partial M$.

It is intuitively clear, and will be formalised in the sequel, that this difference will matter in the moment that specific \textit{boundary conditions} have to be imposed on two adiacent continua: for a \textit{no-penetrability} condition, the first port representation characterised by the $\wedge$ pairing is sufficiently expressive since only normal components of velocity field play a role; instead when a \textit{no-slip} condition needs to be imposed, the second representation of the pairing is needed since the condition is on the whole vector field at the boundary of the two continua, and not just on its normal component.
This aspect has been poorly addressed in the geometric port Hamiltonian literature since the presented models were based on "standard" Stokes Dirac Structure in the sense of \cite{VanDerSchaft2002}, where only scalar valued forms were considered, and the pullback of the inclusion map operation was normally hidden since in that case it was trivially distributed on the wedge product. 

\subsection{Power balance in Newtonian fluids and Eulerian/Lagrangian description}

Now we address the importance of the introduced pairing in the expression of power flows due to stress and energy advection in viscous fluid dynamics, which will be needed to present the FSI model. We refer to \cite{califano2021geometric} for a detailed explanation of the presented fluid dynamic model.

In fluid dynamics the stress is normally decomposed into the sum of hydrostatic pressure and viscous stress as
\begin{equation}
\label{eq:totalStress}
    \cl{T}=-\star p + \cl{T}_{\mathrm{v}} \in \spKFormM{1}\tens\spKFormM{n-1}
\end{equation}
where $\cl{T}_{\mathrm{v}}$ is the viscous stress tensor and $p\in \spFn{M}$ is the hydrostatic pressure function.
When considering Newtonian fluids, $\cl{T}_{\mathrm{v}}$ is defined as the sum of a bulk stress $\cl{T}_{\lambda}$ and a shear stress $\cl{T}_{\kappa}$, defined by \cite{gilbert2019geometric,califano2021geometric}
\begin{align}
    \cl{T}_{\lambda} &:= \lambda (\star \textrm{div}(v)) ,
    \label{eq:DefBulkViscosity}\\
   \cl{T}_{\kappa} &:= 
\kappa (\star_2 \cl{L}_{v}g), \label{eq:DefShearViscosity}
\end{align}
where $\lambda$ and $\kappa$ are respectively the bulk and shear viscosity coefficients.
The bulk stress $\cl{T}_{\lambda}$ depends on the divergence of the velocity vector field $\textrm{div}(v)\in \spFn{M}$, defined as the function such that $\cl{L}_{v}\volF=\textrm{div}(v)\volF$ holds true.
The shear stress $\cl{T}_{\kappa}$ is defined in order to model viscous stresses whenever the transport of the metric under the flow of $v$ is non-zero, i.e., when $v$ fails to be the generator of a rigid body motion. In fact $\cl{L}_v g$ extends the concept of \textit{rate of strain}
to Riemannian manifolds. For example, its components in a Euclidean space on a Cartesian chart are $(\cl{L}_v g)_{ij}=\partial_j v^i+\partial_i v^j$, clearly resembling the standard vector calculus definition of rate of strain \ramy{in} Eucledian space. 

It is important to notice that, contrarily to the bulk stress \ramy{$\cl{T}_{\lambda}$} and the hydrostatic pressure density $\star p$, the shear stress \ramy{$\cl{T}_{\kappa}$} does not admit a formulation as a scalar valued differential form. In fact, being $\cl{L}_v g$ a symmetric 2-rank tensor, it cannot be represented by a scalar valued differential form, which is by definition a totally antisymmetric tensor field. In other words, the difference between a covector-valued $(n-1)$-form \ramy{($\cl{T}_\kappa$)} and a top form \ramy{($\cl{T}_{\lambda},\star p, \cl{H}$)}, is that the former is antisymmetric in its last $n-1$ slots, while the latter in all of them. This is ultimately the phenomenological reason why a geometric representation of Navier--Stokes equations needs to be developed using (co)vector--valued forms \cite{frankel2011geometry}, and why e.g., \textit{Euler equations} can be represented with standard differential forms only.

\ramy{As a consequence,} the stress power \ramy{(\ref{eq:SurfacePowerStress})} for the pressure \ramy{$\star p$} and the bulk stress \ramy{$\cl{T}_\lambda$ parts in (\ref{eq:totalStress})} can be expressed with \ramy{both} the $\wedge$ pairing and the $\dot{\wedge}$ pairing, exactly in the same way it was discussed for the energy density $\cl{H}$.

In \cite{califano2021geometric} the port-Hamiltonian model for Navier-Stokes equation is given on a fixed spatial domain (\ramy{i.e. }$u=0$) and the model produces an instance of (\ref{eq:continuityFixedBoundary}) with boundary flux terms $\textrm{tr}(\Phi)=\textrm{tr}(\iota_v(\cl{T}-\cl{H}))$ and source terms $\sigma$ expressing energy dissipation inside the spatial domain as quadratic functions on the rate of strain $\cl{L}_v g$ (shear stress) and on the divergence $\textrm{div}(v)$ (bulk viscosity). While the bulk stress power admits a representation based on the $\wedge$ pairing (since $\cl{T}_{\lambda}\dot{\wedge} v= (\star \cl{T}_{\lambda}) \wedge (\star v^{\flat})$ ), the shear stress can only be represented within a boundary power port using the $\dot{\wedge}$ pairing. As a matter of fact, using the notation introduced in this paper, in \cite{califano2021geometric} it is shown that

\begin{equation}
\dot{H}=\int_{\partial M} \underbrace{\textrm{ptr}(\cl{T}-\cl{H})}_{e_\Phi} \dot{\wedge} \underbrace{\textrm{ptr}(v)}_{f_\Phi}+\underbrace{\int_M \sigma}_{\textrm{Dissipation}}.
\end{equation}
The power flux expression $\Phi \in \Omega^{n-1}(M)$ is, on any Riemannian manifold, as simple as
\begin{equation}
    \Phi=  \iota_v(\cl{T}-\cl{H})
\end{equation}
where the $\cl{T}$ term is responsible for capturing the power flux due to stress while the $\cl{H}$ part captures the power flux due to pure energy advection. We will refer in the sequel to the object $\cl{T}-\cl{H}$ as the \textit{energy-stress tensor} of the continuum.

The power flux due to energy advection is a consequence of the fact that a fixed domain $M$ is considered as spatial container of the fluid, whose boundary is amenable to energy transfer. In classical fluid dynamic terminology, this corresponds to an \textit{Eulerian} description of the fluid. Allowing for a representation of the fluid dynamic system on a moving spatial domain $M_t$ with instantaneous velocity vector field $u$ as described previously, one obtains the relative power flux
\begin{equation}
    \Phi_{\textrm{rel}}= \iota_v(\cl{T}-\cl{H})+ \iota_u(\cl{H}).
\end{equation}
A representation in which the domain moves with the macroscopic velocity of the fluid is normally called a \textit{Lagrangian} representation, and in this context this corresponds to imposing $u=v$. In this case the physical relative power flux becomes 
\begin{equation}
\label{eq:relativeFlux}
    \Phi_{\textrm{rel}}= \iota_v(\cl{T})
\end{equation}
showing that at a power balance level the effects due to advection are rightfully factored out due to the specific representation that is used, and only stress effects influence the boundary power flow of the system. 

In the following section we show how to put together all the discussed insights to represent a physical fluid-structure interaction model as an example of a physically meaningful application which exhibits a varying boundary. It is worth noticing that the presented framework can be used also to model varying boundaries within a fluid for any relative motions (any $u$) whose extreme cases could be the Eulerian ($u=0$) or Lagrangian ($u=v$) ones.

\section{Port-Hamiltonian formulation of Fluid-Structure Interaction (FSI)}\label{sec:pH_fsi}
\label{sec:fsi}

\begin{figure}
\centering
\includegraphics[width = 0.7\textwidth]{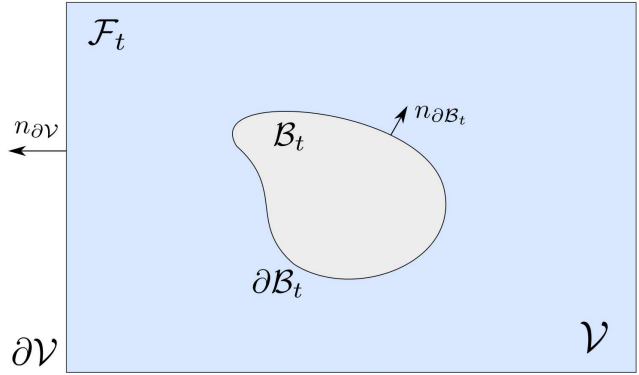}
\caption{Schematic showing the control volume $\cl{V}$ containing the fluid domain $\cl{F}_t$ and the rigid body domain $\cl{B}_t$ at a given time instant $t$. We follow the convention that the submanifolds $\partial \cl{V}$ and $\partial \cl{B}_t$ have outwards orientation.}
\label{fig_fsi_scehmatic}
\end{figure}

In this section we describe the geometric port-Hamiltonian FSI model as an application of distributed systems with moving spatial domain.

\ramy{With reference} to Fig. \ref{fig_fsi_scehmatic}, we denote by $\cl{V} \subset \mathbb{R}^n$ the "control volume", containing the fluid and the rigid body. Being a subset of $\mathbb{R}^n$, $\cl{V}$ inherits the standard Euclidean metric and volume form. Of course\stefano{(non relativistic)} physically meaningful flows are represented by the condition $n \in \{2,3\}$. The closed and connected subset of $\cl{V}$ containing the rigid body at time $t$ is denoted $\cl{B}_t:=\cl{B}(t) \subset \cl{V}$. Its complement $\cl{F}_t:=\cl{V} \setminus \cl{B}_t$ represents the subset of $\cl{V}$ containing the fluid at time $t$. The $(n-1)$-dimensional manifolds corresponding to the boundary of the control volume and of the rigid body are denoted respectively $\partial \cl{V}$ and $\partial \cl{B}_t$. As a consequence $\partial \cl{F}_t=\partial \cl{V} \cup \partial \cl{B}_t$, \stefano{or better, as chains $\partial \cl{F}_t=\partial \cl{V} + \partial \cl{B}_t$}
where $\partial \cl{F}_t$ and $\partial \cl{B}_t$ exhibit opposite orientations\stefano{seen as boundary of $\cl{F}_t$}.

We define the two inclusion maps $i_b: \partial \cl{B}_t \to \cl{B}_t \subset \cl{V}$ and $i_f: \partial \cl{F}_t \to \cl{F}_t \subset \cl{V}$, mapping points of the respective boundaries to the common ambient space represented by the control volume $\cl{V}$. For the partial trace, we will use the notation $\text{ptr}_{I}(\cdot)$, with $I \in \{b,f\}$ to specify with respect to which inclusion map the operation is performed. We assume that $\cl{B}_t$ is a differentiable 
surface ($n=3$) or curve ($n=2$). While the fluid particles are allowed to leave and enter the control volume $\cl{V}$, we assume that the particles of the body (i.e. all $q \in \cl{B}_t$) remain within $\cl{V}$ for all $t$, which avoids \ramy{changing the} topology of $\cl{F}_t$.

In the classical Hamiltonian approach, see e.g. \ramy{\cite{jacobs2012fluid,glass2012movement,fusca2018groupoid}}, the dynamic equations governing the "fluid plus rigid body" system are derived using reduction or variational techniques starting from the configuration of the whole system. Such approach implies that the fluid particles remain always confined within $\cl{V}$, and as such require that the overall system is indeed an isolated, closed system. This is exactly the limitation that the pH framework aims at overcoming, since we will derive the FSI dynamics by modeling the fluid and the rigid body separately as open dynamical systems, which are then interconnected together in an energy-preserving way. Such interconnection will capture the energy exchanged between the rigid body and the fluid through the moving boundary $\cl{B}_t$ as well as the energy exchange between the fluid and the exterior world (with respect to $\cl{V}$) through the fixed boundary $\partial \cl{V}$. It is worth noticing that the pH approach allows easily incorporating a time varying boundary $\partial \cl{V}$ for the control volume. However for what follows we treat $\cl{V}$ as the fixed "ambient space" used to describe the dynamics on $\cl{F}_t$ and $\cl{B}_t$ embedded in $\cl{V}$. Even if the presented methodology can be applied to any kind of flow, we choose to focus on the incompressible case since it provides a neat comparison with low-speed aerodynamic approaches, which mainly consider incompressible flows in FSI systems. Furthermore it does not constitute a loss of generality in the application of the pH approach of interconnection of systems to generate a FSI model, which is the main goal of this section.
\subsection{Port-Hamiltonian model of Incompressible Viscous Flow on a Moving Domain}

\begin{figure}
\centering
\includegraphics[width = 0.8\textwidth]{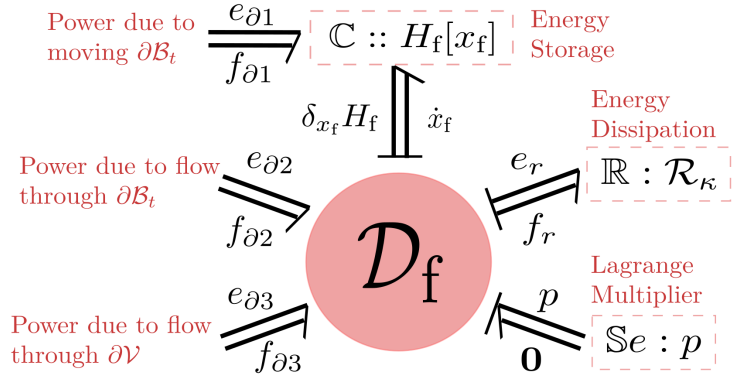}
\caption{Bond-graph representation of the incompressible viscous flow port-Hamiltonian model \ramy{as a network of interconnected energetic subsystems}.}
\label{fig_pH_fluid_bond_graph}
\end{figure}

\begin{table}
	\centering
		\renewcommand{\arraystretch}{1.3}
	\begin{tabular}{l | l }
		Variable 			  							& Description\\ \hline
		$\dot{x}_{\mathrm{f}}=(\dot{\tilde{v}},\dot{\mu})\in \spKForm{1}{\cl{F}_t} \times \spKForm{n}{\cl{F}_t}$	& rate of change of state variables \\
		$\delta_{x_{\mathrm{f}}} H_\text{f} = (\delta_{\tilde{v}}H_\text{f},\delta_{\mu}H_\text{f})\in\spKForm{n-1}{\cl{F}_t}\times \spKForm{0}{\cl{F}_t}$	& co-energy variables \\
		$e_r=\cl{T}_{\kappa} \in \spKForm{1}{\cl{F}_t} \tens \spKForm{n-1}{\cl{F}_t}$ & shear stress tensor \\
		$f_r=\nabla v \in \spVecX{\cl{F}_t}{} \tens \spKForm{1}{\cl{F}_t}$ & velocity gradient tensor \\
		$p\in\spKForm{0}{\cl{F}_t}$ 	& static pressure function \\
		
		$e_{\partial 1} = -\text{ptr}_{b}(\cl{H}_{\text{f}}) \in \spKForm{1}{ \cl{B}_t}\tens_{i_b} \spKForm{n-1}{\partial \cl{B}_t}$ & energy density on $\partial \cl{B}_t$ \\
		$f_{\partial 1} = \textrm{ptr}_{b}(u) \in \spVecX{ \cl{B}_t}{} \tens_{i_b} \spKForm{0}{\partial \cl{B}_t}$ & velocity of $\partial \cl{B}_t$ (moving domain)\\
		$e_{\partial 2} = -\text{ptr}_b(\cl{T}-\cl{H}_\text{f}) \in \spKForm{1}{ \cl{B}_t}\tens_{i_b} \spKForm{n-1}{\partial \cl{B}_t}$ & energy-stress tensor on $\partial \cl{B}_t$ \\
		$f_{\partial 2}= \textrm{ptr}_{b}(v) \in \spVecX{\cl{B}_t}{} \tens_{i_b} \spKForm{0}{\partial \cl{B}_t}$ & velocity of the fluid on $\partial \cl{B}_t$ \\
		$e_{\partial 3} = \text{ptr}_{f}(\cl{T}-\cl{H}_\text{f}) \in \spKForm{1}{ \cl{V}}\tens_{i_f} \spKForm{n-1}{\partial \cl{V}}$ & energy-stress tensor on $\partial \cl{V}$ \\
		$f_{\partial 3} = \textrm{ptr}_{f}(v) \in \spVecX{\cl{V}}{} \tens_{i_f} \spKForm{0}{\partial \cl{V}}$ & velocity of the fluid on $\partial \cl{V}$ \\ \hline
	
	\end{tabular}
	
	\caption{Port variables of the \pH model of incompressible viscous fluid}
	\label{tab_pH_port_variables}
\end{table}

As presented in \cite{califano2021geometric,rashad2021exterior}, in the pH framework, the model for an incompressible viscous flow is composed by several energetic subsystems, connected by power ports as depicted in Fig. \ref{fig_pH_fluid_bond_graph}. Table \ref{tab_pH_port_variables} summarises the spaces of effort and flow variables characterising these subsystems, described in the following. The total energy of the system is represented by the $\mathbb{C}$-element representing kinetic energy stored in the spatial domain $\cl{F}_t$, given by the Hamiltonian functional 
\begin{equation}
\label{eq:HInc}
    H_\text{f}[\tilde{v},\mu]=\int_{\cl{F}_t}\cl{H}_\text{f}(\tilde{v},\mu)=\int_{\cl{F}_t} \frac{1}{2}(\star \mu)\tilde{v} \wedge \star \tilde{v},
\end{equation}
where $\tilde{v}:=\flat (v) \ramy{\in \spKForm{1}{\cl{F}_t}}$ is the one-form obtained from the fluid's velocity vector field $v \in \spVecX{\cl{F}_t}{}$, while $\mu:= \star \rho \in \spKForm{n}{\cl{F}_t}$ 
is the mass top-form, defined as the Hodge dual of the mass density function of the fluid $\rho \in \Omega^0(\cl{F}_t)$. We denote the fluid state by $x_{\mathrm{f}}:=(\tilde{v},\mu)$. The rate of change of stored kinetic energy is expressed as
\begin{equation}
\label{eq:HdotInc}
    \dot{H}_\text{f}=\int_{\cl{F}_t} \delta_{x_{\mathrm{f}}}H_\text{f} \wedge \dot{x}_{\mathrm{f}}+\int_{ \partial \cl{B}_t} e_{\partial 1} \dot{\wedge} f_{\partial 1},
\end{equation}
where the rate of change of the state variables $\dot{x}_{\mathrm{f}}=(\dot{\tilde{v}},\dot{\mu})$ and the variational derivatives $\delta_{x_{\mathrm{f}}}H_\text{f}=(\delta_{\tilde{v}}H_\text{f},\delta_{\mu}H_\text{f})$ represent the flow and effort variables of the energy storage subsystem, respectively. The effort variables of the storage (also called co-energy variables) are given by \cite{rashad2020porthamiltonian1,rashad2020porthamiltonian2}:
\[ \delta_{\tilde{v}}H_\text{f}=(\star \mu) \star \tilde{v} \in \Omega^{n-1}(\cl{F}_t), \qquad \qquad  \delta_{\mu}H_\text{f}=\frac{1}{2}\iota_{v}\tilde{v} \in \Omega^0(\cl{F}_t),\]
representing respectively the mass flow flux-form and the dynamic pressure function \ramy{(modulo the density since $\iota_{v}\tilde{v} = \tilde{v}(v) = g(v,v) $ )}. Furthermore, the boundary port $(e_{\partial 1}, f_{\partial 1})$ characterises the power due to variation of the boundary $\partial \cl{B}_t$ as described in (\ref{eq:boundaruEffort2}-\ref{eq:boundaryPairing2}). As a matter of fact notice that (\ref{eq:HdotInc}) is the specialisation for incompressible fluids of the general power balance for systems with moving domains described in (\ref{eq:DensityVariationMovingBoundary2}), where the moving domain is expressed by the variation of $\partial \cl{B}_t$ (moving with velocity $u$) only since we assumed $\partial \cl{V}$ to be fixed.
Thus, we have
\begin{align}
\label{eq:partial1e}
    e_{\partial 1}&=-\textrm{ptr}_{b}(\cl{H}_\text{f}), \\
    \label{eq:partial1f}
    f_{\partial 1}&=\textrm{ptr}_{b}(u),
\end{align}
where the minus sign in the definition of $e_{\partial 1}$, in contrast to (\ref{eq:boundaruEffort2}), is due to the opposite orientation of $\partial \cl{B}_t$ with respect to $\partial \cl{F}_t$ as described in Fig. \ref{fig_fsi_scehmatic}.

The second energetic subsystem of the fluid's pH model in Fig. \ref{fig_pH_fluid_bond_graph} is the $\mathbb{R}$-element characterising the dissipation occurring within the spatial domain $\cl{F}_t$ due to shear viscosity. As detailed in \cite{califano2021geometric}, the effort and flow variables $(e_r,f_r)=(\cl{T}_{\kappa}, \nabla v)$ are given by the Cauchy shear stress and the velocity gradient, related by Stokes' constitutive relation represented by $e_r=\cl{R}_{\kappa}(f_r)$ such that
\begin{equation}
\label{eq:resistive}
    \int_{\cl{F}_t} e_r \dot{\wedge} f_r =\int_{\cl{F}_t} \cl{R}_{\kappa}(f_r) \dot{\wedge} f_r \geq 0,
\end{equation}
where the non decreasing property of the constitutive relation $\cl{R}_{\kappa}$ characterised the irreversible energy transfer to the thermal domain. The reader is referred to \cite{califano2021geometric} for more details on this representation of energy dissipation due to shear viscosity.

Along with the energy storage and dissipation subsystems, the fluid's pH model consists of four ports which are open for interconnection, one of which is $(e_{\partial 1}, f_{\partial 1})$, representing power transfer due to variation on $\partial \cl{B}_t$ as explained previously.
The other boundary ports, namely $(e_{\partial 2}, f_{\partial 2})$ and $(e_{\partial 3}, f_{\partial 3})$, describe the power exchange due to mass flow through the fluid's boundaries $\partial \cl{B}_t$ and $\partial \cl{V}$, respectively. The corresponding effort and flow variables are given by
\begin{align}
\label{eq:partial2e}
    e_{\partial 2}&=-\text{ptr}_{b}(\cl{T}-\cl{H}_{\text{f}}), \\
    \label{eq:partial2f}
    f_{\partial 2}&=\textrm{ptr}_{b}(v), \\
    \label{eq:partial3e}
     e_{\partial 3}&=\text{ptr}_{f}(\cl{T}-\cl{H}_{\text{f}}), \\
     \label{eq:partial3f}
    f_{\partial 3}&=\textrm{ptr}_{f}(v),
\end{align}
with the sign difference between $e_{\partial 2}$ and $e_{\partial 3}$ reflecting the difference in orientation between $\partial \cl{B}_t$ and $\partial \cl{V}$. Here $\cl{T}$ is the total stress given in (\ref{eq:totalStress}), where the bulk part of the stress is identically zero due to \ramy{the} incompressibility \ramy{of the fluid flow}.
The last open port of the pH model is the distributed port $(p,0)\in \Omega^0(\cl{F}_t) \times \Omega^n(\cl{F}_t)$, representing the action of the fluid's static pressure $p$, a Lagrangian multiplier that enforces the incompressibility condition $\extd \star \tilde{v}=0$, without exchanging power with the fluid, i.e. there is zero power flowing through the port $(p,0)$. Notice that this Lagrangian multiplier completely characterises the static pressure in the tensor $\cl{T}$, which is indeed not dependent on any thermodynamic potential in the incompressible case \cite{rashad2020porthamiltonian2}.

The final component of the fluid's pH model is the SDS $\cl{D}_\text{f}$, which is mathematically an infinite-dimensional subspace of the total port-space $\mathfrak{F}_\text{f} \times \mathfrak{E}_\text{f}$ corresponding to the four ports $(\delta_{x_{\mathrm{f}}} H_\text{f}, \dot{x}_{\mathrm{f}}),(e_r,f_r),(e_{\partial 2}, f_{\partial 2})$, and $(e_{\partial 3}, f_{\partial 3})$, with $\mathfrak{F}_\text{f}$ denoting the total space of flows and $\mathfrak{E}_\text{f}:=\mathfrak{F}_\text{f}^*$ the total space of efforts, i.e., $(\dot{x}_{\mathrm{f}},f_r,f_{\partial 2},f_{\partial 3})\in \mathfrak{F}_\text{f}$ and $(\delta_{x_{\mathrm{f}}}H_\text{f},e_r,e_{\partial 2},e_{\partial 3})\in \mathfrak{E}_\text{f}$, where the individual spaces are given in Table \ref{tab_pH_port_variables}.

The explicit representation of the pH model consisting of all the components explained above is given by
\begin{align}
	\dot{\tilde{v}} =& - \extd (\delta_{\mu} H_\text{f}) - \iota_v \extd \vf - \frac{\extd p}{\star \mu} + \frac{\kappa}{\star \mu} \star \extd \star \extd \vf, \label{eq:PDE_1}\\
	\dot{\mu} =& - \extd(\delta_{\tilde{v}}H_\text{f}), \label{eq:PDE_2}\\
	0 =& \extd \star \vf. \label{eq:PDE_3}
\end{align}
which are the incompressible Navier-Stokes equations in covariant form.
The reader is referred to \cite{califano2021geometric} for a detailed description of the port-Hamiltonian structure of viscous fluid dynamics which underlies these equations \ramy{and to \cite{rashad2021exterior} for their vector calculus representations}. 

Furthermore, $\cl{D}_\text{f}$ encodes the power balance
\begin{equation}
    -\int_{\cl{F}_t} (\delta_{x_{\mathrm{f}}}H_\text{f} \wedge \dot{x}_{\mathrm{f}}-e_r \dot{\wedge} f_r)+\int_{\partial \cl{B}_t}e_{\partial 2} \dot{\wedge} e_{\partial 2}+\int_{\partial \cl{V}}e_{\partial 3} \dot{\wedge} e_{\partial 3}=0
\end{equation}
which, using (\ref{eq:partial1e}-\ref{eq:partial3f}) can be rewritten as
\begin{align}
    \dot{H}_\text{f}&=-\int_{\cl{F}_t} e_r \dot{\wedge} f_r+\int_{\partial \cl{B}_t}(e_{\partial 1} \dot{\wedge} e_{\partial 1}+e_{\partial 2} \dot{\wedge} e_{\partial 2})+\int_{\partial \cl{V}}e_{\partial 3} \dot{\wedge} e_{\partial 3} \nonumber \\
    & \leq \int_{\partial \cl{B}_t}[\text{ptr}_{b} (\cl{H}_\text{f}-\cl{T})\dot{\wedge} \textrm{ptr}_{b}(v)-\text{ptr}_{b} (\cl{H}_\text{f})\dot{\wedge} \textrm{ptr}_{b}(u)]+\int_{\partial \cl{V}} \text{ptr}_{f} (\cl{H}_\text{f}-\cl{T})\dot{\wedge} \textrm{ptr}_{f}(v)
\end{align}
which states that the rate of change in kinetic energy in the fluid domain $\cl{F}_t$ is equal to the external supplied power through the moving boundary $\partial \cl{B}_t$ and the fixed boundary $\partial \cl{V}$, in addition to the internally dissipated power due to viscosity, leading to the inequality on $\dot{H}_\text{f}$.

\ramy{\begin{remark}
Note that in Fig. \ref{fig_pH_fluid_bond_graph} the storage port $(\delta_{x_{\mathrm{f}}}H_\text{f},\dot{x}_{\mathrm{f}})$, the resistive port $(e_r,f_r)$ and the Lagrange multiplier port $(p,0)$ have a specified causality, indicated by the stroke on their respective ports.
On the other hand, the three boundary ports $(e_{\partial i},f_{\partial i}), i\in\{1,2,3\},$ do not have a specific causality as it depends on the external interconnection.
For the storage port, the Dirac structure specifies the flow $\dot{x}_{\mathrm{f}}$ to the $\bb{C}$-element which returns information about the effort $\delta_{x_{\mathrm{f}}}H_\text{f}({x}_{\mathrm{f}})$ after integration in time.
Similarly, the Dirac structure specifies the flow $f_r$ to the $\bb{R}$-element which returns the effort $e_r = \cl{R}_\kappa(f_r)$.
As for the Lagrange multiplier port, the effort $p$ should specified to the Dirac structure such that its dual flow variable is always zero.
\end{remark}}

\subsection{Port-Hamiltonian model of the Rigid Body}

Now we turn attention to the pH model for a generic rigid body motion in $\mathbb{R}^n$ in the presence of a gravitational field.
\ramy{While all the following constructions could be presented in a coordinate-free and dimension-independent manner as in \cite{Stramigioli2001,van1997interconnected}, we will present it next for $n=3$ and using matrix representations for ease of exposition and to make the subject accessible to a wider audience. For that purpose, we introduce the body-fixed reference frame $\PsiB$, attached to the center of mass of the rigid body at an arbitrary orientation, and the inertial reference frame $\PsiV$ fixed at an arbitrary point in $\cl{V}$.}
In what follows we detail the \pH model of the rigid body depicted in Fig. \ref{fig_pH_body_bond_graph} while the spaces of effort and flow variables of the different components of the model are summarised in Table \ref{tab_pH_port_variables_rb}. 

\begin{figure}
\centering
\begin{subfigure}{0.9\textwidth}
         \centering
         \includegraphics[width=\textwidth]{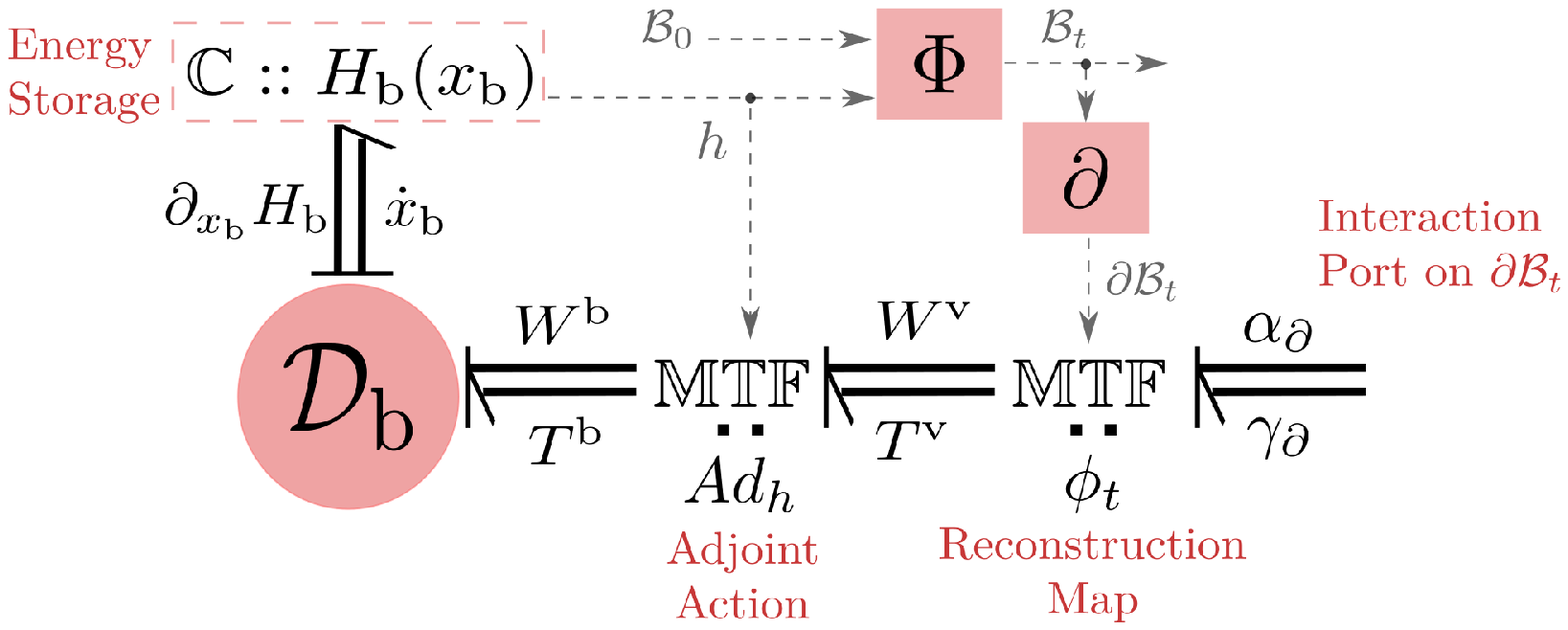}
         \caption{Bond graph}
\end{subfigure}
\\
\begin{subfigure}{0.8\textwidth}
         \centering
         \includegraphics[width=\textwidth]{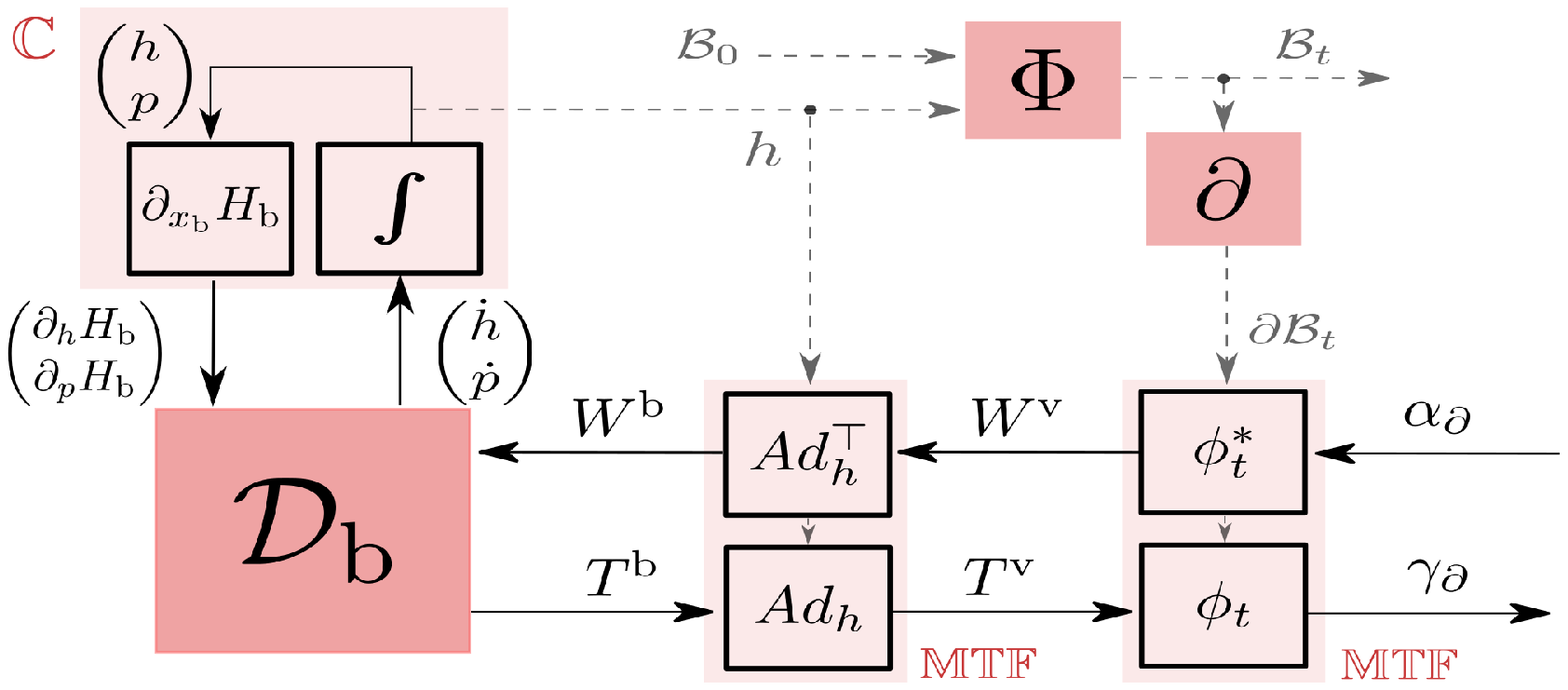}
         \caption{Block diagram}
\end{subfigure}
\caption{Graphical representation of the rigid body port-Hamiltonian model as a bond-graph (a) and its corresponding block-diagram (b).}
\label{fig_pH_body_bond_graph}
\end{figure}

\begin{table}
	\centering
		\renewcommand{\arraystretch}{1.3}
	\begin{tabular}{l | l }
		Variable 			  							& Description\\ \hline
		$\dot{x}_{\mathrm{b}}=(\dot{h},\dot{p}) \in T_h SE(3) \times \mathfrak{se}^*(3)$	& rate of change of state variables \\
		$\partial_{x_{\mathrm{b}}}H_\text{b}=(\partial_{h}H_\text{b},\partial_{p}H_\text{b})\in T_h^*SE(3)\times \mathfrak{se}(3)$	& co-energy variables \\
		$T \in \mathfrak{se}(3)$ & twist \\
		$W \in \mathfrak{se}^*(3)$ 	& wrench \\
		
		$\gamma_{\partial} \in \spVecX{ \cl{B}_t}{} \tens_{i_b} \spKForm{0}{\partial \cl{B}_t}$ & reconstructed twist on $\partial \cl{B}_t$ \\
		$\alpha_{\partial} \in \spKForm{1}{\cl{B}_t} \tens_{i_b} \spKForm{n-1}{\partial \cl{B}_t}$ & reconstructed wrench on $\partial \cl{B}_t$ \\ \hline
	
	\end{tabular}
	
	\caption{Port variables of the \pH model of rigid body motion}
	\label{tab_pH_port_variables_rb}
\end{table}

First, the energy storage $\mathbb{C}$-element characterises the rigid body's total energy composed of kinetic and gravitational potential energy. After some choices of references, and considering in what follows the matrix Lie groups as representation for $SE(3)$ and $SO(3)$ and the corresponding Lie-algebra, the state variable is given by $x_{\mathrm{b}}:=(h,p) \in SE(3) \times \mathfrak{se}^*(3)$ which consists of the configuration and the generalised momentum of the rigid body, respectively. The configuration space is identified with $SE(3)$, the space of positive isometries on $\mathbb{R}^3$, representing proper rigid body motions (i.e., rotations and translations without reflections). It is well known that $SE(3)$ is the semidirect product group of the special orthogonal group $SO(3)$ and $\mathbb{R}^3$. Thus we write $h=(R,\xi)$, with $R\in SO(3)$ \ramy{denoting the orientation of $\PsiB$ with respect to $\PsiV$ and $\xi \in \mathbb{R}^3$ denoting the origin of $\PsiB$ expressed in $\PsiV$}. The group composition operator and inverse of $SE(3)$ are defined by:
\begin{align}
    h_1 \circ h_2 &:=(R_1,\xi_1)\circ (R_2,\xi_2)=(R_1 R_2, \xi_1 + R_1 \xi_2),\\
    h^{-1}&:=(R,\xi)^{-1}=(R^{-1},-R^{-1}\xi).
\end{align}
The space $\mathfrak{se}^*(3)$ is the dual of the Lie algebra $\mathfrak{se}(3)$ of the (matrix) Lie group $SE(3)$, where the elements $T\in \mathfrak{se}(3)$ represent the configuration independent
velocity (i.e., rotational and translational) of the rigid body, which is referred to as twist. The Lie algebra $\mathfrak{se}(3)$ is identifiable with $\mathfrak{so}(3) \times \mathbb{R}^3$ with $\mathfrak{so}(3)$ the Lie algebra of $SO(3)$, which can be identified with $\mathbb{R}^{3}$ using the isomorphism
\begin{align}
    \cl{S}:\mathbb{R}^{3} &\to \mathfrak{so}(3)\nonumber \\
    \omega &\to \cl{S}(\omega)=: \tilde{\omega},
    \label{eq:skewsymIsom}
\end{align}
which takes the form
 \begin{align*}
     \mathbb{R}^3 \ni \begin{pmatrix} \omega^1 \\ \omega^2 \\ \omega^3 \end{pmatrix} \mapsto \begin{pmatrix}
     0 & -\omega^3 & \omega^2 \\ \omega^3 & 0 & -\omega^1 \\ -\omega^2 & \omega^1 & 0 \end{pmatrix} \in \mathfrak{so}(3).
 \end{align*}

Therefore, we can associate to every twist $T \in \mathfrak{se}(3)$ a pair of vectors $(\omega, v) \in \mathbb{R}^{3} \times \mathbb{R}^3$, that represent the angular and linear velocities of the rigid body, respectively.
By duality, we can do the same for elements of $\mathfrak{se}^*(3)$. 
\ramy{As mentioned earlier}, we present all the details of the pH model in this paper using the vector representations of $\mathfrak{se}(3)$ and $\mathfrak{se}^*(3)$ instead of using abstract vector spaces. \ramy{Therefore, we will interchangeably identify elements of $\mathfrak{se}(3)$ and $\mathfrak{se}^*(3)$ with vectors in $\mathbb{R}^6$, which can be represented either in $\PsiB$ or $\PsiV$.} 

The total energy stored by the rigid body is characterised by the Hamiltonian function given by the sum of kinetic and gravitational potential energy:
\begin{equation}\label{eq:Ham_rigid_body}
    H_\text{b}(h,p)=\half p^{T} \cl{I}^{-1} p + m g^{T} \xi,
\end{equation}
\ramy{where the generalized momentum $p$ is expressed in $\PsiB$, $\cl{I} \in \mathbb{R}^{6 \times 6}$ denotes the matrix representation of its (constant) inertia tensor expressed in $\PsiB$}, $m$ is the mass of the rigid body, and $g \in \mathbb{R}^3$ is the inverse direction of the gravitational acceleration vector in \ramy{$\PsiV$}.
The rate of change of the total energy is given by:
\begin{equation}\label{eq:Hdot_b}
    \dot{H}_\text{b}=\langle \partial_h H_\text{b} | \dot{h} \rangle_{T_h SE(3)} +\langle \partial_p H_\text{b} | \dot{p} \rangle_{\mathfrak{se}^*(3)},
\end{equation}
where the dual pairing notation $\langle | \rangle$ is embedded with a subscript indicating the vector space on which the pairing is implemented.
The rate of change of the state variables $\dot{x}_{\mathrm{b}}=(\dot{h},\dot{p})\in T_hSE(3) \times \mathfrak{se}^*(3)$ and the partial derivatives $\partial_{x_{\mathrm{b}}}H_\text{b}:=(\partial_h H_\text{b}, \partial_p H_\text{b})\in T_h^*SE(3) \times \mathfrak{se}(3)$ represent the flow and effort variables of the energy storage subsystem, respectively.
It can be shown that \ramy{\cite{rashad2021energy,Stramigioli2001,van1997interconnected}}
\begin{align}
    \partial_h H_\text{b}&=(0,mg)\in T_R^*SO(3) \times T_{\xi}\mathbb{R} \cong T_h^*SE(3) \\
    \partial_p H_\text{b} &=\cl{I}^{-1} p=:T^\text{b} \in \mathfrak{se}(3), \label{eq:Tb_def}
\end{align}
where $T^\text{b}\in \mathfrak{se}(3)$ denotes the twist of $\PsiB$ with respect to $\PsiV$ expressed in $\PsiB$.
This concludes the details of the energy storage $\mathbb{C}$-element of the pH model.
The equations of motion are encoded in the finite-dimensional Dirac structure $\cl{D}_\text{b}$ and are given by
\begin{align}
    \begin{pmatrix} 
    \dot{h} \\ \dot{p}
    \end{pmatrix}&=\begin{pmatrix}
    0 & \chi_h \\ -\chi^*_h & \cl{J}(p)
    \end{pmatrix}\begin{pmatrix}
    \partial_{h}H_\text{b} \\ \partial_{p}H_\text{b} \end{pmatrix}+\begin{pmatrix}
    0 \\ I_{6}
    \end{pmatrix}W^\text{b}, \label{eq:pH_rigid_1}\\
    T^\text{b}&=\begin{pmatrix}
    0 & I_6 \end{pmatrix}\begin{pmatrix}
    \partial_{h}H_\text{b} \\ \partial_{p}H_\text{b} \end{pmatrix}, \label{eq:pH_rigid_2}
\end{align}
where $W^\text{b}\in \mathfrak{se}^*(3)$ represents the external wrench (i.e., generalised force) applied \ramy{to the rigid body and expressed in $\PsiB$}, $I_6$ is the $6$-dimensional identity matrix, $\cl{J}(p)\in \mathbb{R}^{6 \times 6}$ is the skew-symmetric matrix:
\begin{equation*}
    \cl{J}(p)=\begin{pmatrix}
    \tilde{p}_{\omega} & \tilde{p}_v \\ \tilde{p}_v & 0
    \end{pmatrix}, \,\,\,\,\,\, p=\begin{pmatrix}
    p_\omega \\ p_v
    \end{pmatrix} \in \mathfrak{se}^*(3)\cong \mathbb{R}^6,
\end{equation*}
where \ramy{$p_\omega,p_v \in \mathbb{R}^3$ represent the angular and linear momenta of the rigid body expressed in $\PsiB$} and $\tilde{p}_{\omega}$, $\tilde{p}_v$ represent their skew-symmetric matrix counterparts given by (\ref{eq:skewsymIsom}).
Furthermore, the map $\chi_h: \mathfrak{se}(3) \to T_h SE(3)$ relates the Lie algebra element $T^\text{b} \in \mathfrak{se}(3)$ with the tangent vector $\dot{h}:=\chi_h(T^\text{b})\in T_h SE(3)$ by:
\begin{equation*}
    \dot{h}=(\dot{R},\dot{\xi})=(R\omega^\text{b},Rv^\text{b})\in T_RSO(3) \times \mathbb{R}^n \cong T_h SE(3),
\end{equation*}
\ramy{where $\omega^\text{b},v^\text{b} \in \bb{R}^3$ denote the linear and angular velocity parts of $T^\text{b}$.}
On the other hand, the dual map $\ramy{\chi}^*_h:T^*_hSE(3) \to \mathfrak{se}^*(3)$ relates the effort variable $\partial_h H_\text{b} \in T_h^*SE(3)$ to a wrench in $\mathfrak{se}^*(3)$ \ramy{and is defined implicitly by
\begin{equation}\label{eq:def_chi}
\langle  \chi^*_h(\Gamma_h) | T \rangle_{\mathfrak{se}(3)} = \langle \Gamma_h | \chi_h(T) \rangle_{T_h SE(3)} , \qquad \forall T\in \mathfrak{se}(3), \Gamma_h\in T_h^*SE(3).
\end{equation}}
Note that the term $-\chi_h^*(\partial_h H_\text{b})$ \ramy{in the momentum balance (\ref{eq:pH_rigid_1})} corresponds to the wrench due to gravity which is simply "minus" the gradient of a potential function.

\ramy{
The power balance encoded by the Dirac structure $\cl{D}_\text{b}$ defining the \pH model in (\ref{eq:pH_rigid_1}-\ref{eq:pH_rigid_2}) is given by the following result.
\begin{theorem}
Along solutions of the \pH system (\ref{eq:pH_rigid_1}-\ref{eq:pH_rigid_2}), the Hamiltonian function (\ref{eq:Ham_rigid_body}) satisfies the power balance:
$$\dot{H}_\text{b}=\langle W^\text{b} | T^\text{b} \rangle_{\mathfrak{se}(3)}.$$
\end{theorem}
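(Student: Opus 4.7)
The proof is a standard power-balance calculation for a finite-dimensional Dirac structure, and I would carry it out by direct substitution starting from the chain-rule expression (\ref{eq:Hdot_b}) for $\dot{H}_\text{b}$. First I would use the dynamic equations (\ref{eq:pH_rigid_1})--(\ref{eq:pH_rigid_2}) together with the identification $\partial_p H_\text{b} = T^\text{b}$ from (\ref{eq:Tb_def}) to replace $\dot{h}$ by $\chi_h(T^\text{b})$ and $\dot{p}$ by $-\chi^*_h(\partial_h H_\text{b}) + \cl{J}(p) T^\text{b} + W^\text{b}$. This converts $\dot{H}_\text{b}$ into a sum of four duality pairings.

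Then I would show that three of those four terms cancel in pairs or vanish on their own. The pairing $\langle \partial_h H_\text{b} | \chi_h(T^\text{b}) \rangle_{T_h SE(3)}$ is, by the very definition (\ref{eq:def_chi}) of the adjoint map $\chi^*_h$, equal to $\langle \chi^*_h(\partial_h H_\text{b}) | T^\text{b} \rangle_{\mathfrak{se}(3)}$, which is precisely minus the contribution of $-\chi^*_h(\partial_h H_\text{b})$ inside the $\langle T^\text{b} | \dot{p} \rangle$ pairing (using that the duality bracket is symmetric as a bilinear form between $V$ and $V^*$). Next I would observe that the contribution of $\cl{J}(p) T^\text{b}$ in that pairing equals the scalar $T^{\text{b},\top} \cl{J}(p) T^{\text{b}}$ in the matrix representation introduced above, which vanishes because $\cl{J}(p)$ is manifestly skew-symmetric from its explicit $2\times 2$ block form. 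What survives is exactly $\langle T^\text{b} | W^\text{b} \rangle_{\mathfrak{se}^*(3)} = \langle W^\text{b} | T^\text{b} \rangle_{\mathfrak{se}(3)}$, establishing the claimed balance.

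There is no real obstacle here beyond bookkeeping: one must be careful that each variable is correctly placed on the $V$ or $V^*$ side of each pairing, that the adjoint relation (\ref{eq:def_chi}) is invoked with the right orientation, and that the skew-symmetry of $\cl{J}(p)$ is applied after moving to the $\mathbb{R}^6$ representation. Conceptually this is the familiar fact that, in any port-Hamiltonian system, the storage-effort/storage-flow pairing encoded by a Dirac structure reduces, on-shell, to the net power flowing through the external ports; here that external port is the single wrench--twist port $(W^\text{b}, T^\text{b})$.
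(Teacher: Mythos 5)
Your proposal is correct and follows essentially the same route as the paper's own proof: substitute the dynamics (\ref{eq:pH_rigid_1}--\ref{eq:pH_rigid_2}) into (\ref{eq:Hdot_b}), cancel the configuration terms via the adjoint relation (\ref{eq:def_chi}), kill the $\cl{J}(p)$ term by skew-symmetry, and identify the remainder as $\langle W^\text{b} | T^\text{b} \rangle_{\mathfrak{se}(3)}$ using (\ref{eq:Tb_def}). No gaps; the bookkeeping caveats you note are exactly the points the paper's computation handles.
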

\begin{proof}
By substituting (\ref{eq:pH_rigid_1}-\ref{eq:pH_rigid_2}) in (\ref{eq:Hdot_b}) and using the skew-symmetry of $\cl{J}$ and the definitions of $\chi_h^*$ in (\ref{eq:def_chi}) and $T^\text{b}$ in (\ref{eq:Tb_def}), we have that
\begin{align*}
\dot{H}_\text{b}= & \langle \partial_h H_\text{b} | \dot{h} \rangle_{T_h SE(3)} +\langle \partial_p H_\text{b} | \dot{p} \rangle_{\mathfrak{se}^*(3)}\\
=& \langle \partial_h H_\text{b} | \chi_h(\partial_{p}H_\text{b}) \rangle_{T_h SE(3)} + \langle \partial_p H_\text{b} | -  \chi^*_h(\partial_{h}H_\text{b}) + \cl{J}(p)\partial_p H_\text{b} + W^\text{b} \rangle_{\mathfrak{se}^*(3)}\\
=& \langle \partial_h H_\text{b} | \chi_h(\partial_{p}H_\text{b}) \rangle_{T_h SE(3)} - \langle  \chi^*_h(\partial_{h}H_\text{b}) | \partial_p H_\text{b} \rangle_{\mathfrak{se}(3)} + \langle  W^\text{b} |\partial_p H_\text{b}  \rangle_{\mathfrak{se}(3)}\\
=& \langle  W^\text{b} |T^\text{b} \rangle_{\mathfrak{se}(3)}.
\end{align*}
\end{proof}}
Note that the duality pairing between $W^\text{b} \in \mathfrak{se}^*(3)$ and $T^\text{b}\in \mathfrak{se}(3)$ above corresponds to the external power supplied to the rigid body from the entity generating the wrench $W^\text{b}$. The pair $(W^\text{b},T^\text{b})$ \ramy{define} the power port by means of which the rigid body can be interconnected to other systems to compose a bigger dynamical system.
For more details on this pH representation of rigid body motion and its derivation using Lie-Poisson reduction, the reader is referred to \cite{Stramigioli2001,rashad2021energy}.

Since the goal of this section is to show how to use the interaction port \ramy{$(W^\text{b},T^\text{b})$} to construct the FSI \ramy{\pH}model, it is important to introduce the mathematical tools making the open ports for the fluid system and the rigid body compatible for interconnection. 
\ramy{To this purpose, we introduce two additions to the rigid body \pH model presented above which are different representations of the interaction port $(W^\text{b},T^\text{b})$.
The first addition is a change of coordinates from $\PsiB$ to $\PsiV$ described by
\begin{equation}\label{eq:change_coord_twist_wrench}
T^\text{v} = Ad_h T^\text{b}, \qquad \qquad W^\text{b} = Ad_h^\top W^\text{v},
\end{equation}
where $\map{Ad_h}{\mathfrak{se}(3)}{\mathfrak{se}(3)}$ is the adjoint action of $SE(3)$ on $\mathfrak{se}(3)$ that takes the matrix representation
$$Ad_h = \TwoTwoMat{R}{0}{\tilde{\xi}R}{R}, \qquad \forall h = (R,\xi)\in SE(3).$$
By the definition of a matrix transpose, it is straightforward to show that
\begin{equation}\label{eq:MTF_PsiV}
\langle  W^\text{b} |T^\text{b} \rangle_{\mathfrak{se}(3)} = \langle  W^\text{v} |T^\text{v} \rangle_{\mathfrak{se}(3)}.
\end{equation}
}

\begin{figure}
\centering
\includegraphics[width = 0.7\textwidth]{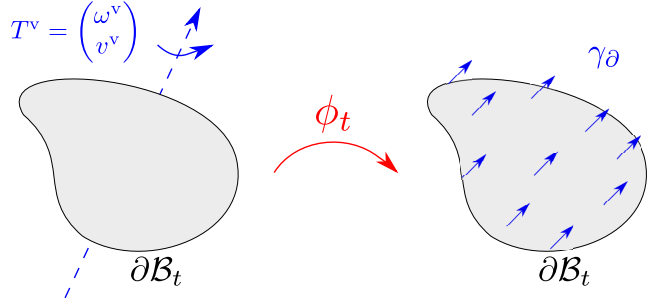}
\caption{Illustration of the action of the reconstruction map $\phi_t$ associating to a twist $T^\text{v}$ a vector-field $\gamma_\partial$ on the boundary $\partial\cl{B}_t$.}
\label{fig_reconstruction_map}
\end{figure}

\ramy{For the second addition,} we introduce the "reconstruction map"
\begin{equation*}
    \phi_t:\mathfrak{se}(3) \to \spVecX{\cl{B}_t}{} \tens_{i_b} \spKForm{0}{\partial \cl{B}_t}  
\end{equation*}
which, at any time instant $t$, reconstructs from the finite-dimensional twist $T^\text{v} \in \mathfrak{se}(3)$ its corresponding infinite-dimensional vector field $\phi_t(T^\text{v})$ defined at the surface of the rigid body, as depicted in Figure \ref{fig_reconstruction_map}.
The components of the vector field $\phi_t(T^\text{v})=: \gamma_{\partial}$ in \ramy{the inertial frame $\PsiV$} are given by
\begin{equation*}
    \gamma_{\partial}^i=-\tilde{q}^i_k \omega^k +v^i,\qquad i,k\in\{1,2,3\},
\end{equation*}
where Einstein sum convention on repeated indices is used.
Equivalently, using vector notation
\begin{equation*}
    \pmb{\gamma_\partial}=-\tilde{\pmb{q}}^\text{v} \omega^\text{v} + v^\text{v}=\tilde{\omega}^\text{v} \pmb{q}^\text{v}+v^\text{v},
\end{equation*}
where $\pmb{q}^\text{v}\in \mathbb{R}^3$ denotes the coordinates of the point $q \in \partial \cl{B}_t$ in $\PsiV$ and $\tilde{q}^i_k \in \mathbb{R}$ are the components of the skew-symmetric matrix $\tilde{\pmb{q}}^\text{v} \in \mathfrak{so}(3)$. Note hat for $n=3$ we have $\tilde{\pmb{q}}\omega=\pmb{q} \times \omega$ which represents the vector product in $\mathbb{R}^3$.

\ramy{\begin{remark}
\label{rem:lierem}
In other works in the literature, e.g. \cite{glass2012movement,planas2014viscous}, the vector field $\pmb{\gamma_\partial}$ is introduced as $\pmb{\gamma_\partial} = \tilde{\omega}^\text{v}(\pmb{q}^\text{v} - \xi) + \dot\xi$ which is equivalent to the form presented above using the identity $v^\text{v} = \dot{\xi} - \dot{R}R^\top \xi = \dot{\xi} -\tilde{\omega}^\text{v}\xi$.
\end{remark}}

In summary, the reconstruction map $\Psi_t$ allows \stefano{to} represent the twist $T^\text{v} \in \mathfrak{se}(3)$ as the vector-valued zero-form $\phi_t(T)$ on the $(n-1)$-dimensional manifold $\partial \cl{B}_t$ of the rigid body. \stefano{Such a concept could be also defined without the use of coordinates and using the} \ramy{induced action of} \stefano{$se(3)$ on the Eucledian space} \ramy{$\bb{R}^3$}.
By duality, the dual map 
\begin{equation*}
    \phi_t^*:\spKForm{1}{\cl{B}_t} \tens_{i_b} \spKForm{n-1}{\partial \cl{B}_t} \to \mathfrak{se}^*(3)
\end{equation*}
allows representing any covector-valued $(n-1)$-form $\alpha_{\partial}$ as a finite-dimensional wrench $\phi_t^*(\alpha_{\partial})\in \mathfrak{se}^*(3)$ defined implicitly \ramy{for any $T^\text{v} \in \mathfrak{se}(3)$} by
\begin{equation}
\label{eq:dualityPsi}
    \langle \phi_t^*(\alpha_{\partial}) | T^\text{v} \rangle_{\mathfrak{se}(3)}=\int_{\partial \cl{B}_t} \alpha_{\partial} \dot{\wedge} \phi_t(T^\text{v}).
\end{equation}
By letting $W^\text{v}:=\phi_t^*(\alpha_{\partial})$, (\ref{eq:dualityPsi}) can be rewritten as
\begin{equation}\label{eq:MTF_reconstr}
    \langle W^\text{v}|T^\text{v} \rangle_{\mathfrak{se}(3)}=\int_{\partial \cl{B}_t} \alpha_{\partial} \dot{\wedge} \gamma_{\partial},
\end{equation}
which states that the power flowing through the finite-dimensional port $(W^\text{v},T^\text{v})$ is equal to that flowing through the infinite-dimensional port $(\alpha_\partial, \gamma_\partial)$. 

\ramy{The two power balances (\ref{eq:MTF_PsiV}) and (\ref{eq:MTF_reconstr}) and their corresponding maps are characterised graphically in Fig. \ref{fig_pH_body_bond_graph} by the two modulated transformers with the symbol $\mathbb{MTF}$}, similarly to what was introduced in \cite{Mahony} for a visual application.
\ramy{The first transformer is modulated in the sense that it requires, at a given time instant, the current configuration $h\in SE(3)$ of the rigid body to change coordinates of the external wrench and the body twist by (\ref{eq:change_coord_twist_wrench}).
Similarly, the second} transformer is modulated because it requires explicitly all points $q\in \partial \cl{B}_t$, at a given time instant, to calculate the vector field $\gamma_\partial$ given a twist $T$ and to calculate the wrench $W$ given the covector-valued form $\alpha_\partial$.

\ramy{The current configuration $h_t:=h(t)$ of the rigid body is determined by the solution of (\ref{eq:pH_rigid_1}).}
Let $\cl{B}_0$ denote the reference configuration of the rigid body at $t=0$ and $\partial \cl{B}_0$ denote its boundary. \ramy{At every time instant, $h_t=(R_t,\xi_t) \in SE(3)$, being a positive isometry on $\cl{V}$, maps any $q\in \partial \cl{B}_0$ to its location $h_t(q):=R_t(q)+\xi_t$ at time $t$.} The same applies to the rigid body's interior points in $\cl{B}_0$. Thus, at any time $t$, $\cl{B}_t:=h_t(\cl{B}_0)$ and $\partial \cl{B}_t:=h_t(\partial \cl{B}_0)$ are defined as the images of $\cl{B}_0$ and $\partial \cl{B}_0$ under $h_t$, respectively. We denote this action of $SE(3)$ by \ramy{$\Phi$, as shown in Fig. \ref{fig_pH_body_bond_graph}.
This concludes the \pH model of a floating rigid body in a gravitational field.}

\subsection{Port-Hamiltonian Fluid-Structure Interaction}
Finally, using all the presented constructions, we will now interconnect both the pH model of the incompressible viscous flow with that of the rigid body to compose a complete dynamical model describing fluid-structure interaction, depicted in Fig. \ref{fig_pH_fsi_bond_graph}.

\begin{figure}
\centering
\includegraphics[width = \textwidth]{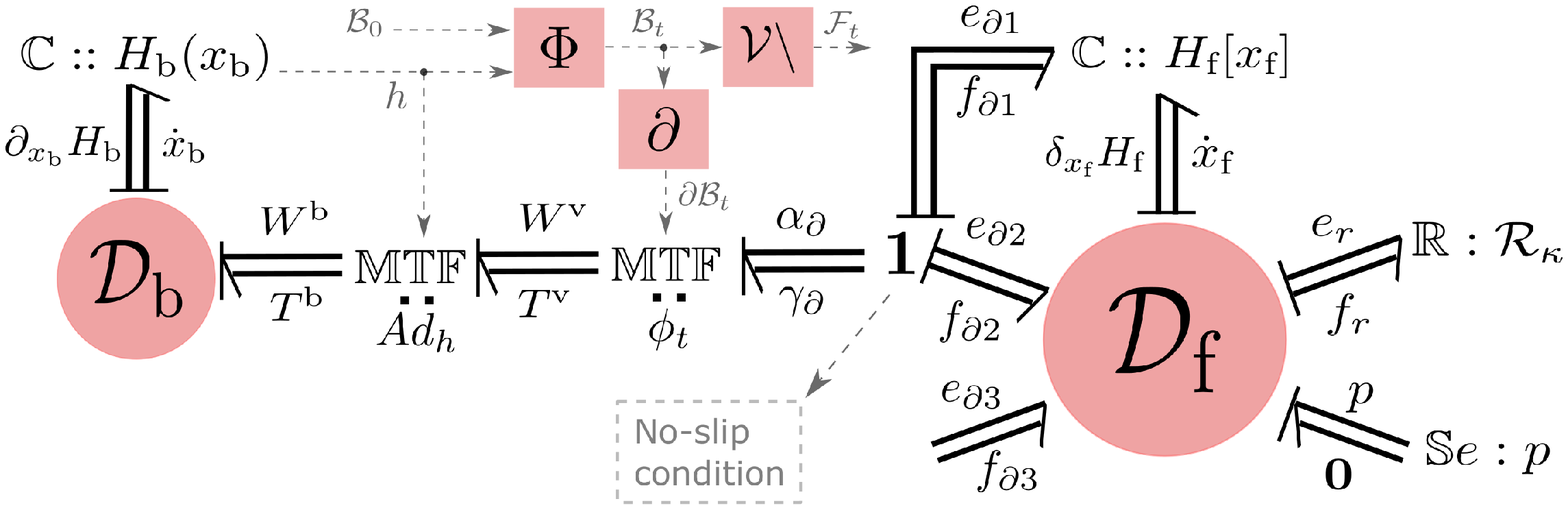}
\caption{Bond-graph representation of the combined port-Hamiltonian models describing the fluid-structure interaction.}
\label{fig_pH_fsi_bond_graph}
\end{figure}

By comparing Fig. \ref{fig_pH_fsi_bond_graph} to Fig. \ref{fig_pH_fluid_bond_graph} and \ref{fig_pH_body_bond_graph}, it is straightforward to see the compositional nature of the pH framework, which allows to combine the models of the distributed-parameter fluid dynamical system with the lumped-parameter rigid body dynamics by only specifying how the two systems exchange energy though the boundary $\partial \cl{B}_t$. This energy exchange is characterised by the common flow constraint, graphically represented by the $1$-junction in Fig. \ramy{\ref{fig_pH_fsi_bond_graph}}. This constraint acts on the three ports $(e_{\partial 1}, f_{\partial 1}),(e_{\partial 2},f_{\partial 3})$ and $(\alpha_\partial,\gamma_{\partial})$ by means of:
\begin{equation*}
    f_{\partial 1}=f_{\partial 2}=\gamma_{\partial}, \qquad \qquad \alpha_\partial + e_{\partial 1} +e_{\partial 2}=0.
\end{equation*}
In words, the flow variables are equivalent while the effort variables sum to zero. Using the definition of $f_{\partial i}$ and $e_{\partial i}$ in Table \ref{tab_pH_port_variables}, we can rewrite the above constraints as
\begin{align}
    &\textrm{ptr}_{b}(u)=\textrm{ptr}_{b}(v)=\gamma_\partial, \\
    &\alpha_\partial=\text{ptr}_{b}(\cl{T})=\text{ptr}_{b}(- \star p +\cl{T}_{\kappa}).
\end{align}
The flow constraint describe\ramy{s} the no-slip condition, stating that the velocity of the fluid at $\partial \cl{B}_t$ ($\textrm{ptr}_{b}(v)$) is equal to the velocity of the boundary itself ($\textrm{ptr}_{b}(u)$), which is generated by the rigid body motion ($\gamma_\partial$). On the other hand, the effort constraint states that the surface stress on the rigid body's boundary ($\alpha_{\partial}$) is \ramy{equal to} the total stress tensor ($\text{ptr}_{b}(\cl{T})$) \ramy{which includes the} static pressure and shear stress. Notice that the dynamic pressure ($\text{ptr}_{b}(\cl{H}_{\text{f}})$) component cancels out due to the same reason previously seen in (\ref{eq:relativeFlux}), i.e. the flow constraint makes the fluid follow the body (Lagrangian description) at $\partial \cl{B}_t$, eliminating energy transfer due to advection.
\ramy{On the other hand, this term} remains rightfully present at the boundary $\partial \cl{V}$, where the representation of the fluid keeps being of Eulerian type. 

Now we prove that the above constraints on the two pH models correctly describe the complete fluid-body system by computing the wrench applied on the rigid body's surface caused by the fluid. This wrench is given by the following result.

\begin{theorem}
Consider the wrench $\ramy{W^\text{\normalfont{v}}} \in \mathfrak{se}^*(3)$ given by 
\begin{equation*}
    \ramy{W^\text{\normalfont{v}}}=\ramy{\phi}^*_t(\alpha_\partial)=\ramy{\phi}^*_t(\text{ptr}_{b}(- \star p +\cl{T}_{\kappa})).
\end{equation*}
Let $\ramy{\tau,f} \in \mathbb{R}^3$ denote the torque and force parts, respectively, of the wrench. Furthermore, let $\pmb{n}$ (with components $n^i$) denote the normal vector field to $\partial \cl{B}_t$ and $\volF^{\partial 
\cl{B}}$ the induced volume form on the $(n-1)$-dimensional manifold $\partial \cl{B}_t$. The components of $\tau$ and $f$ (indicated with indices down since they are co-vectors) are then given by
\begin{align*}
    f_i&=\int_{\partial \cl{B}_t}(-p \cdot \delta_{ij}+\sigma_{ij})n^i \volF^{\partial \cl{B}}, \\
    \tau_i &=\int_{\partial \cl{B}_t} (-p \cdot \delta_{ij}+\sigma_{ij})\tilde{q}^i_k n^k \volF^{\partial \cl{B}},
\end{align*}
where $\delta_{ij}$ are the kronecker delta symbols and $\sigma_{ij}$ are the Cauchy stress tensor matrix components in Eucledian space, related by (\ref{eq:DefShearViscosity}) by $\cl{T}_{\kappa}=\star_2 \sigma$, considering $\sigma$ as covector-valued one-form.
\begin{proof}
In order to compute the result, we need to manipulate the implicit expression (\ref{eq:dualityPsi}) in order to extract a closed form expression for $\ramy{\phi}^*_t$:

\begin{align*}
    \underbrace{\tau_j \omega^j+f_j v^j}_{\langle W|T \rangle_{\mathfrak{se}(3)}}&=\int_{\partial \cl{B}_t} \alpha_\partial \dot{\wedge} \gamma_\partial \\
    &=\int_{\partial \cl{B}_t}(-\star p + \star_2 \sigma)|_{\partial \cl{B}_t} \dot{\wedge} \gamma_\partial \\
    &=\int_{\partial \cl{B}_t} (-p \cdot \delta_{ij}+\sigma_{ij})n^i\gamma^j \volF^{\partial \cl{B}} \\
    &=\int_{\partial \cl{B}_t} (-p \cdot \delta_{ij}+\sigma_{ij})n^i(-\tilde{q}^j_k \omega^k+v^j) \volF^{\partial \cl{B}} \\
    &=\int_{\partial \cl{B}_t} [(-p \cdot \delta_{ij}+\sigma_{ij})n^i v^j +(-p \cdot \delta_{ij}+\sigma_{ij}) \tilde{q}^i_k n^k \omega^j]\volF^{\partial \cl{B}}
\end{align*}
which \ramy{proves} the result.
\end{proof}
\end{theorem}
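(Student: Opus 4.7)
The plan is to derive the closed-form expression for $\phi_t^*$ from its implicit defining identity (\ref{eq:dualityPsi}) and then match coefficients against the decomposition of an arbitrary twist $T^\text{v} = (\omega, v) \in \mathfrak{se}(3)$. Since any $W^\text{v} \in \mathfrak{se}^*(3)$ is pinned down by the scalar pairing $\langle W^\text{v} | T^\text{v}\rangle_{\mathfrak{se}(3)} = \tau_j \omega^j + f_j v^j$ for all $(\omega,v)$, it suffices to show that the right-hand side of (\ref{eq:dualityPsi}), after substitution of $\alpha_\partial$, equals the claimed integral expression for every $T^\text{v}$.

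First I would rewrite the stress contribution in an index-transparent way: $\cl{T} = -\star p + \cl{T}_\kappa = -\star p + \star_2 \sigma$ where $\sigma$ is the Cauchy stress as a covector-valued one-form with components $\sigma_{ij}$. Applying $\textrm{ptr}_b$ restricts the form leg to tangent directions along $\partial \cl{B}_t$ while keeping the covector leg on $\cl{V}$. Next I would expand the $\dot\wedge$ pairing between the generalized covector-valued $(n-1)$-form $\alpha_\partial$ and the generalized vector-valued $0$-form $\gamma_\partial = \phi_t(T^\text{v})$: by definition this contracts the covector leg of $\alpha_\partial$ against $\gamma_\partial$ and leaves an ordinary $(n-1)$-form on $\partial \cl{B}_t$. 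Concretely, evaluating $\star p$ and $\star_2 \sigma$ on the normal direction through the Hodge star relation in Euclidean space yields the factor $n^i$ contracted with the Euclidean volume induced on $\partial \cl{B}_t$, giving an integrand $(-p\,\delta_{ij} + \sigma_{ij}) n^i \gamma^j \, \volF^{\partial \cl{B}}$.

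Then I would substitute the explicit coordinate form of the reconstruction map, $\gamma^j = -\tilde q^j_k \omega^k + v^j$, and split the integrand into the $v$-linear piece and the $\omega$-linear piece. Matching with $\tau_j \omega^j + f_j v^j$ on the left and using the arbitrariness of $(\omega,v)$ reads off
\begin{align*}
f_j &= \int_{\partial \cl{B}_t}(-p\,\delta_{ij}+\sigma_{ij})\, n^i \, \volF^{\partial \cl{B}}, \\
\tau_k &= \int_{\partial \cl{B}_t}(-p\,\delta_{ij}+\sigma_{ij})\, n^i (-\tilde q^j_k) \, \volF^{\partial \cl{B}},
\end{align*}
and a relabelling of indices together with the antisymmetry $\tilde q^j_k = -\tilde q^k_j$ (or, equivalently, using $-\tilde q^j_k \omega^k = \tilde\omega^j_k q^k$ as already noted in Remark \ref{rem:lierem}) puts the torque component into the form claimed in the statement.

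The main obstacle I expect is step two: turning the formal $\dot\wedge$ pairing of a generalized covector-valued $(n-1)$-form with a generalized vector-valued $0$-form into the concrete pointwise integrand $(-p\,\delta_{ij} + \sigma_{ij})n^i \gamma^j \volF^{\partial \cl{B}}$. This requires one to be careful that the partial trace $\textrm{ptr}_b$ restricts the form leg to $T(\partial \cl{B}_t)$ so that Hodge-starring the covector leg in $\cl{V}$ and then restricting produces exactly the outward-normal contraction $n^i$ against the induced surface volume form, with correct signs fixed by the outward orientation convention of $\partial \cl{B}_t$. Once this identification is in place, the remainder of the proof is the routine algebraic rearrangement indicated above.
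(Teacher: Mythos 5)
Your proposal is correct and follows essentially the same route as the paper's own proof: substitute $\alpha_\partial=\textrm{ptr}_b(-\star p+\star_2\sigma)$ into the implicit duality (\ref{eq:dualityPsi}), expand the $\dot\wedge$ pairing in Euclidean coordinates to obtain the integrand $(-p\,\delta_{ij}+\sigma_{ij})n^i\gamma^j\,\volF^{\partial\cl{B}}$, insert $\gamma^j=-\tilde q^j_k\omega^k+v^j$, and read off $f$ and $\tau$ by matching coefficients of the arbitrary $(\omega,v)$, with the same index relabelling via the antisymmetry of $\tilde{\pmb{q}}$ for the torque term. The step you flag as the main obstacle (reducing the $\dot\wedge$ pairing to the normal contraction against the induced surface volume) is treated at exactly the same level of detail in the paper, so no further comparison is needed.
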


For a sake of completeness \ramy{and comparison with other works e.g. \cite{planas2014viscous}} we report the vector calculus notation of the computed wrench:
\begin{align*}
    \Vec{f}&=\int_{\partial \cl{B}_t}(-p I_n+\Sigma)\Vec{n} \volF^{\partial \cl{B}}, \\
    \Vec{\tau} &=\int_{\partial \cl{B}_t} (-p I_n + \Sigma)\pmb{\tilde{q}} \Vec{n} \volF^{\partial \cl{B}},
\end{align*}
where $\Sigma$ is the matrix representation of the 2-rank Cauchy stress tensor having as entries $\sigma_{ij}$. We highlight that the expression for the force coincides with the works following vector calculus-based derivations like \cite{planas2014viscous}, while the expression for the torque varies since we are not deriving the components of the torque in an inertial frame with respect to an origin, but part of the geometric wrench. The two expressions can be related with an argument similar to that discussed in Remark \ref{rem:lierem}.

\section{Conclusions and future work}
\label{sec:conc}
In this paper we extended the geometric port-Hamiltonian formulation for infinite-dimensional systems to the case in which the spatial domain of the underlying PDE is moving in time. We introduce a novel duality to define the power port corresponding to the moving domain mechanism, which uses the technology of vector-valued forms, in contrast to the standard formulation which uses scalar-valued forms. We demonstrate how the novel defined duality is necessary in order to represent the no-slip condition, which made it possible to give a covariant representation of a fluid-structure interaction system in the port-Hamiltonian framework.

As future work we are working on extending the theoretical model to the case of FSI involving elasticity in the solid, since the presented procedure will allow to produce the final model by only changing the dynamic model of the solid, while the fluid model and the interconnection procedure would apply the same way. Furthermore we are researching numerical techniques able to integrate the presented FSI system by conveniently exploiting the geometric port-Hamiltonian structure.

\section*{Funding}
This work was supported by the PortWings project funded by the European Research Council [Grant Agreement No. 787675]

\bibliography{refs}

\end{document}